\begin{document}

\title{Minimum $n$-Rank Approximation via Iterative Hard Thresholding}
\shorttitle{Minimum $n$-Rank Approximation via IHT}

\author{%
{\sc Min Zhang\thanks{Email: minzhang@tju.edu.cn},
Lei Yang\thanks{Email: ylei@tju.edu.cn}
and Zheng-Hai Huang\thanks{Corresponding author. He is also with the Center for Applied Mathematics of Tianjin
University. Email: huangzhenghai@tju.edu.cn}} \\[2pt]
Department of Mathematics, School of Science,
Tianjin University, Tianjin 300072, P.R. China}
\shortauthorlist{M. Zhang \emph{et al.}}

\maketitle

\begin{abstract}
{The problem of recovering a low $n$-rank tensor is an extension of sparse recovery problem from the low dimensional
space (matrix space) to the high dimensional space (tensor space) and has many applications in computer vision and graphics
such as image inpainting and video inpainting. In this paper, we consider a new tensor recovery model, named as minimum
$n$-rank approximation (MnRA), and propose an appropriate iterative hard thresholding algorithm with giving the upper bound
of the $n$-rank in advance. The convergence analysis of the proposed algorithm is also presented. Particularly, we show that for the noiseless case, the linear convergence with rate $\frac{1}{2}$ can be obtained for the proposed algorithm under proper conditions.
Additionally, combining an effective heuristic for determining $n$-rank, we can also apply the proposed algorithm to solve MnRA when $n$-rank is unknown in advance. Some preliminary numerical results on randomly generated and real low $n$-rank tensor
completion problems are reported, which show the efficiency of the proposed algorithms.}
{iterative hard thresholding; low-$n$-rank tensor recovery; tensor completion; compressed sensing}
\end{abstract}

\section{Introduction}
\label{sec;introduction}

The problem of recovering an unknown low-rank matrix $\hat{X} \in \mathbb{R}^{m\times n}$ from the linear constraint
$\mathscr{A}(\hat{X})=\bm{b}$, where $\mathscr{A}:~\mathbb{R}^{m\times n}\rightarrow\mathbb{R}^p$ is the linear
transformation and $\bm{b} \in \mathbb{R}^p$ is the measurement, has been an active topic of recent research
with a range of applications including collaborative filtering (the Netflix probolem) \citep{gnot92},
multi-task learning \citep{aep08}, system identification \citep{lv09}, and sensor localization \citep{blwy06}. One method to
solve this inverse problem is to solve the matrix rank minimization problem:
\begin{eqnarray}\label{pm1}
\min_{X\in \mathbb{R}^{m\times n}}~\mbox{\rm rank}(X) \quad \mbox{\rm s.t.}\quad \mathscr{A}(X)={\bm b},
\end{eqnarray}
which becomes a mathematical task of minimizing the rank of $X$ such that it satisfies the linear constraint.
With the application of nuclear norm which is the tightest convex approach to the rank function, one can relax the non-convex
NP-hard problem (\ref{pm1}) to a tractable, convex one \citep[see][]{bmp2010,eb2009}.
An alternative model of this inverse problem is the minimum rank approximation problem:
\begin{eqnarray}\label{pm}
\min_{X\in \mathbb{R}^{m\times n}}~\|\bm{b}-\mathscr{A}(X)\|_2 \quad \mbox{\rm s.t.}\quad \mbox{\rm rank}(X)\leq r,
\end{eqnarray}
where $r=\mathrm{rank}(\hat{X})$ is known in advance, and $\hat{X}$ is the true data to be reconstructed. The model in (\ref{pm}) has been widely studied in the literature \citep[see][]{hh09,kom10,lb10,rs2009,dmk2011,bl09}. In fact, this formulation can not only work for the exact recovery case ($\mathscr{A}(\hat{X})=\bm{b}$), but also suit for the noisy case ($\bm{b}=\mathscr{A}(\hat{X})+\epsilon$), where $\epsilon$ denotes the noise by which the measurements are corrupted. Although the model (\ref{pm}) is based on a priori knowledge of the rank of
$\hat{X}$, an incremental search over $r$, which increases the complexity of the solution by at most factor $r$, can be applied when the minimum rank $r$ is unknown. Particularly, if an upper bound on $r$ is available, we can use a bisection search over $r$ since the minimum of (\ref{pm}) is monotonously decreasing in $r$. Then the factor can reduce to $\log r$.
Several effective algorithms based on (\ref{pm}) have
been proposed, such as OPTSPACE \citep{rs2009}, Space Evolution and Transfer (SET) \citep{dmk2011},
Atomic Decomposition for Minimum Rank Approximation (ADMiRA) \citep{lb10} and the Iterative Hard Thresholding (IHT)
introduced in \citep{gm10}. Among these algorithms, iterative hard thresholding algorithm is an easy-to-implement and fast method, which also shows the strong performance guarantees available with methods based on convex
relaxation.

Recently, many researchers focus on the recovery problem in the high dimensional space, which has many applications in
computer vision and graphics such as image inpainting \citep{bmgv2000} and video inpainting. More specifically, by using the
$n$-rank as a sparsity measure of a tensor (or multidimensional array), this inverse problem can be transformed into
the mathematical task of recovering an unknown low $n$-rank tensor $\hat{\mathcal{X}} \in \mathbb{R}^{n_1 \times \cdots \times n_N}$
from its linear measurements $\mathscr{A}(\hat{\mathcal{X}})=\bm{b}$ via a given linear transformation $\mathscr{A}:{\mathbb{R}}^{n_1\times n_2\times \ldots \times n_N } \rightarrow \mathbb{R}^p$ with $p\leq\prod^{N}_{i=1}n_i$.
Some related works can be found in \citet{gry11}, \citet{jppj2009}, \citet{mlj2010}, \citet{mqlj2011} and
\citet{yhs12}. In all these studies,
the authors mainly discussed the following tensor recovery model:
\begin{eqnarray}\label{p11}
\min_{\mathcal{X}}~\sum_{i=1}^N w_i\mbox{\rm rank}({\mathcal{X}}_{<i>})\quad \mbox{\rm s.t.}\quad
\mathscr{A}({\mathcal{X}})=\bm{b},
\end{eqnarray}
where $\mathcal{X} \in \mathbb{R}^{n_1 \times \cdots \times n_N}$ is the decision variable, ${\mathcal{X}}_{<i>}$
is the mode-$i$ unfolding (the notation will be given in Section 2) of $\mathcal{X}$, $w_i$'s are the weighted parameters which satisfy $0\leq w_i\leq 1$ and $\sum_{i=1}^N w_i=1$. Note that (\ref{p11}) can be regarded
as an extension of (\ref{pm1}) in the high dimensional space ${\mathbb{R}}^{n_1\times n_2\times \ldots \times n_N }$
and it is a difficult non-convex problem due to the combination nature of the function rank($\cdot$).
In order to solve it, the common method is replacing rank($\cdot$) by its convex envelope to get a convex tractable approximation
and developing effective algorithms to solve the convex approximation, including FP-LRTC (fixed point continuation method for low
$n$-rank tensor completion) \citep{yhs12}, TENSOR-HC (hard completion) \citep{mqlj2011}, and ADM-TR(E) (alternative direction method algorithm for low-$n$-rank tensor recovery) \citep{gry11}. Additionally, \citet{zh12}
investigated the exact recovery conditions for the low $n$-rank tensor recovery problems via its convex
relaxation. And lately, \citep{gq2014} studied the problem of robust low $n$-rank tensor recovery in a convex optimization framework, drawing upon
recent advances in robust Principal Component Analysis and tensor completion.

In this paper, we consider a new alternative recovery model extended from problem (\ref{pm}), which is called as
{\em minimum $n$-rank approximation} (MnRA):
\begin{eqnarray}\label{p1}
\min_{\mathcal{X}\in \mathbb{R}^{n_1 \times \cdots \times n_N} }~\|\mathscr{A}({\mathcal{X}})-\bm{b}\|^2_2\quad \mbox{\rm s.t.}
\quad \mbox{\rm rank}({\mathcal{X}}_{<i>})\leq r_i~~\forall~i,
\end{eqnarray}
where $(r_1,r_2,\cdots,r_N)$ is the $n$-rank of the ture data $\hat{\mathcal{X}}$ to be restored.
Note that this formulation has not been discussed in tensor space in the literature to our knowledge and it
also includes both the noisy case ($\mathscr{A}(\hat{\mathcal{X}})+\epsilon=\bm{b}$) and noiseless case ($\mathscr{A}(\hat{\mathcal{X}})=\bm{b}$).
One of its special cases is the {\em low $n$-rank tensor
completion} (LRTC) problem:
\begin{eqnarray}\label{p-add1}
\min_{\mathcal{X}\in \mathbb{R}^{n_1 \times \cdots \times n_N}}~\|\mathcal{X}_{\Omega}-\mathcal{M}_{\Omega}\|^2_F \quad \mbox{\rm s.t.}
\quad \mbox{\rm rank}({\mathcal{X}}_{<i>})\leq r_i~~\forall~i,
\end{eqnarray}
where $\mathcal{X}$, $\mathcal{M}$ are $N$-way tensors with identical size in each mode, and $\mathcal{X}_{\Omega}$
(or $\mathcal{M}_{\Omega}$) denotes the tensor whose $(i_1,i_2,\cdots,i_N)$-th component
equal to $\mathcal{X}_{i_1 i_2 \cdots i_N}$ (or $\mathcal{M}_{i_1 i_2 \cdots i_N}$) if $(i_1,i_2,\cdots,i_N) \in \Omega$
and zero otherwise. To solve (\ref{p1}), we propose an iterative hard thresholding algorithm, which is easy to implement
and very fast. Particularly, we prove that for the noiseless case the iterative sequence generated by the proposed algorithm is globally linearly convergent to the true data $\hat{\mathcal{X}}$ with the rate $\frac{1}{2}$ under
some conditions, while for the noisy case the distance between the iterative sequence and the true data $\hat{\mathcal{X}}$ is decreased quickly associated with the noise $\epsilon$. Additionally, combining an effective heuristic
for determining $n$-rank, we can also apply the proposed algorithm to solve MnRA
when $n$-rank of $\hat{\mathcal{X}}$ is unknown in advance. Some preliminary numerical
results are reported and demonstrate the efficiency of the proposed algorithms.

The rest of our paper is organized as follows. In Section 2, we first briefly introduce some preliminary knowledge of tensor.
Then, we propose an iterative hard thresholding algorithm to solve the minimum $n$-rank approximation problem
in Section 3 and the convergence analysis of the proposed algorithm will be presented in Section 4. In Section 5 and Section 6, we
give some implementation details and report some preliminary numerical results for low $n$-rank tensor completion,
respectively. Conclusions are given in the last section.

\section{Preliminary knowledge}

In this section, we briefly introduce some essential nomenclatures and notations used in this paper;
and more details can be found in \citet{tb2008}. Scalars are denoted by lowercase letters, e.g.,
$a,b,c,\cdots$; vectors by bold lowercase letters, e.g., $\bm{a},\bm{b},\bm{c},\cdots$;
and matrices by uppercase letters, e.g., $A,B,C,\cdots$. An \textit{N}-way tensor is denoted as
$\mathcal{X}\in\mathbb{R}^{n_1 \times \ldots \times n_N}$, whose elements are denoted as
$x_{j_1 \cdots j_k \cdots j_N}$, where $1\leqslant j_k \leqslant n_k$ and $1\leqslant k\leqslant N$.
Let us denote tensor space by $\mathbb{T}$ for convenience, i.e., ${\mathbb{T}}:={\mathbb{R}}^{n_1\times n_2
\times \ldots \times n_N }$. Then, for any ${\mathcal{X}}, {\mathcal{Y}}\in {\mathbb{T}}$, the inner product
is defined as
\begin{eqnarray*}
\langle{\mathcal{X}},{\mathcal{Y}}\rangle=\sum_{j_1=1}^{n_1}\sum_{j_2=1}^{n_2}\cdots\sum_{j_N=1}^{n_N}
{\mathcal{X}}_{j_1 j_2 \ldots j_N}{\mathcal{Y}}_{j_1 j_2 \ldots j_N}.
\end{eqnarray*}
Obviously, the tensor space ${\mathbb{T}}$ becomes a Hilbert space with the above definition of the inner
product, and the corresponding Frobenius-norm is $\|{\mathcal{X}}\|_F=\sqrt{\langle{\mathcal{X}}, {\mathcal{X}}\rangle}$.

The mode-$i$ fibers are all vectors $x_{j_1\ldots j_{i-1}:j_{i+1}\ldots j_N}$ obtained by fixing the indexes
of $\{j_1,\ldots j_N\}\backslash j_i$, which are analogue of matrix rows and columns. The mode-$i$ unfolding
of ${\mathcal{X}}\in{\mathbb{T}}$, denoted by ${\mathcal{X}}_{<i>}$, arranges the mode-$i$ fibers to be the
columns of the resulting matrix. The tensor element $(j_1,j_2,\ldots,j_N)$ is mapped to the matrix element
$(j_i,l)$, where
\begin{eqnarray*}
l=1+\sum_{k=1,\;k\neq i}^N(j_k-1)L_k~~\mbox{\rm with}~~L_k=\prod_{j=1,\;j\neq i}^{k-1}n_j,
\end{eqnarray*}
which infers ${\mathcal{X}}_{<i>}\in \mathbb{R}^{n_i\times T_i}$, where $T_i=\prod_{k=1,k\neq i}^N n_k$.
We also follow \citet{gry11} to define the $n$-rank of a tensor ${\mathcal{X}}\in {\mathbb{T}}$ by
\begin{eqnarray*}
n\mathrm{-}\mbox{\rm rank}({\mathcal{X}})=
(\mbox{\rm rank}({\mathcal{X}}_{<1>}),\mbox{\rm rank}({\mathcal{X}}_{<2>}),\cdots,\mbox{\rm rank}({\mathcal{X}}_{<N>})).
\end{eqnarray*}
In the following parts of this paper, we say ${\mathcal{X}}$ is an ($r_1, r_2,..., r_N$)-rank tensor, if for any $i$, the
rank of mode-$i$ unfolding of ${\mathcal{X}}$ is not greater than $r_i$, i.e., $\mbox{\rm rank}({\mathcal{X}}_{<i>})
\leq r_i$ for all $i$. It should be pointed out that this definition is different from the notation of a ``rank-($r_1, r_2,..., r_N$) tensor" in \citet{lbj2000}, which represents a tensor with the rank of each mode-$i$ unfolding is exactly $r_i$. The best ($r_1, r_2, ..., r_N$)-rank approximation $\widetilde{{\mathcal{X}}}$ of a tensor ${\mathcal{X}}$ is defined as the following:
$$\widetilde{{\mathcal{X}}}=\arg\min\{ \|{\mathcal{Y}}-{\mathcal{X}}\|_F:~{\mathcal{Y}}~ \mbox{\rm is a (}r_1, r_2, ..., r_N\mbox{\rm )-rank tensor}\}.$$

The $i$-mode (matrix) product of a tensor $\mathcal{X}\in\mathbb{R}^{n_1 \times \cdots \times n_N}$ with a matrix $U \in \mathbb{R}^{L \times n_i}$ is denoted by
$\mathcal{X} \times_{i} U$ and is of size $n_1 \times \cdots \times n_{i-1} \times L \times n_{i+1} \times \cdots \times n_N$. It can be expressed in terms of unfolded tensors:
\begin{eqnarray*}
\mathcal{Y} = \mathcal{X} \times_{i} U \quad \Longleftrightarrow  \quad Y_{<i>}=U X_{<i>}.
\end{eqnarray*}

Additionally, for any transformation $\mathscr{A}$, $\|\mathscr{A}\|$ denotes the operator norm of the transformation $\mathscr{A}$;
and for any vector $x$, we use Diag$(x)$ to denote a diagonal matrix with its $i$-th diagonal element being $x_i$.


\section{Iterative hard thresholding for low $n$-rank tensor recovery}

In this section, we will derive an iterative hard thresholding algorithm to solve problem (\ref{p1}). As a fast and efficient algorithm,
iterative hard thresholding algorithm has been
widely applied in various fields. \citet{bd08} and \citet{p09} first independently
proposed the iterative hard thresholding algorithm to solve the compressed sensing recovery problem.
Later, Blumensath and Davies presented a theoretical analysis of the iterative hard thresholding algorithm when applied to the compressed
sensing recovery problem in \citet{bd09}. Through the analysis, they showed that the simple iterative hard thresholding
algorithm has several good properties, including near-optimal error guarantees, robustness to observation noise, short memory requirement and low computational complexity. Also, it requires a fixed number of iterations and its performance guarantees are uniform. Recently, \citet{b12} used acceleration methods of choosing the step-size appropriately to improve the convergence speed of the iterative hard thresholding algorithm.

When it came to matrix space from vector space, \citet{gm10} studied
the convergence/ recoverability properties of the fixed point continuation algorithm and its variants for matrix
rank minimization. Particularly, in \citet{gm10}, the authors proposed an iterative hard thresholdinging algorithm
and discussed its convergence. At each iteration of their iterative hard thresholding algorithm, the authors
first performed a gradient step
$$Y^{k+1}:=X^k-{\mathscr{A}}^*({\mathscr{A}}(X^k)-\bm{b}),$$
where $X^{k}$ denotes the $k$-th iteration of $X$, $Y^{k+1}$ denotes the ($k+1$)-th iteration of $Y$ and ${\mathscr{A}}^*$
is the adjoint operator of ${\mathscr{A}}$ that is a linear transformation operating from $\mathbb{R}^p$ to $\mathbb{T}$. Then,
they applied hard thresholding operator to the singular values of $Y^{k+1}$, i.e., they only kept the largest $r$ singular
values of $Y^{k+1}$, to get $X^{k+1}$. It is easy to see that $X^{k+1}$ is actually the best $r$-rank approximation
to $Y^{k+1}$. More specifically, by using $R_r(X)$ to denote the hard thresholding operator with threshold $r$ for $X$,
the iterative scheme of their algorithm is as follows:
\begin{eqnarray*}
\left\{\begin{array}{l} Y^{k+1}=X^k-{\mathscr{A}}^*({\mathscr{A}}(X^k)-\bm{b}),\\
X^{k+1}=R_r(Y^{k+1}).\end{array}\right.
\end{eqnarray*}
Lately, in \citet{kc13}, they studied acceleration schemes via memory-based techniques and randomized,
$\epsilon$-approximate matrix projections to decrease the computational costs in the recovery process.
In this paper, inspired by the work of \citet{gm10}, we will develop an iterative hard thresholding algorithm for minimum $n$-rank approximation (\ref{p1}). In the following, we will do some theoretical analysis of problem (\ref{p1}) in order to derive the iterative scheme of our algorithm.

Firstly, we consider the objective function $f({\mathcal{X}}):=\|{\mathscr{A}}({\mathcal{X}})
-\bm{b}\|_2^2$ in (\ref{p1}). Similar to that in \citet{bd08}, we introduce a surrogate objective function $F:~\mathbb{T}\otimes\mathbb{T}\otimes\cdots\otimes\mathbb{T}
\rightarrow \mathbb{R}$ instead of function $f$:
\begin{eqnarray}\label{sfunction}
\begin{array}{lc}
F(\mathcal{Z}_0,\mathcal{Z}_1,\mathcal{Z}_2,\ldots,\mathcal{Z}_N)=\|\mathscr{A}(\mathcal{Z}_0)-\bm{b}\|_2^2
+\frac{1}{\tau}\sum \limits_{i=1}^N w_i\|\mathcal{Z}_0-\mathcal{Z}_i\|_F^2-\sum \limits_{i=1}^N w_i\|{\mathscr{A}}(\mathcal{Z}_0)-{\mathscr{A}}({\mathcal{Z}}_i)\|_2^2,
\end{array}
\end{eqnarray}
where $\tau>0$, $w_i\in[0,1]$, $\sum_{i=1}^N w_i=1$ and $\mathcal{Z}_0,\mathcal{Z}_1,\ldots,\mathcal{Z}_N \in \mathbb{T}$ are auxiliary variables in the domain of function $F$.
It is obvious that $F({\mathcal{X}},{\mathcal{X}},\ldots,{\mathcal{X}})=f({\mathcal{X}})$ and if $\|\mathscr{A}\|^2\leq \frac{1}{\tau}$, $f({\mathcal{X}})\leq F({\mathcal{X}},\mathcal{Z}_1,\mathcal{Z}_2,\ldots$, $\mathcal{Z}_N)$ for all ${\mathcal{X}}\in \mathbb{T}$, where $\|\mathscr{A}\|$ denotes the operator norm of linear operator $\mathscr{A}$. So, function $F$ is said to majorize $f$.

Let ${\mathcal{X}}^k$ be the $k$-th iteration and the $(k+1)$-th iteration ${\mathcal{X}}^{k+1}$ be the minimum of the function $F$ by setting its later $N$ variables to $X^k$, i.e., ${\mathcal{X}}^{k+1}=\mathop{\mathrm{argmin}}\limits_{\mathcal{X}} F({\mathcal{X}},
{\mathcal{X}}^k, {\mathcal{X}}^k,\ldots,{\mathcal{X}}^k)$. If $\|\mathscr{A}\|^2\leq \frac{1}{\tau}$, we have
\begin{eqnarray*}
f({\mathcal{X}}^{k+1}) &=&\|\mathscr{A}({\mathcal{X}}^{k+1})-\bm{b}\|_2^2\\
&\leq& \|\mathscr{A}({\mathcal{X}}^{k+1})-\bm{b}\|_2^2+(\frac{1}{\tau}\|{\mathcal{X}}^{k+1}-{\mathcal{X}}^k\|_F^2-\|{\mathscr{A}}({\mathcal{X}}^{k+1})-{\mathscr{A}}({\mathcal{X}}^k)\|_2^2)\\
&=& F({\mathcal{X}}^{k+1},{\mathcal{X}}^k,{\mathcal{X}}^k,...,{\mathcal{X}}^k)\\
&\leq& F({\mathcal{X}}^{k},{\mathcal{X}}^k,{\mathcal{X}}^k,...,{\mathcal{X}}^k)\\
&=& f({\mathcal{X}}^{k}),
\end{eqnarray*}
where the first inequality follows from the assumption that $\|\mathscr{A}\|^2\leq \frac{1}{\tau}$, and the second inequality
follows from that ${\mathcal{X}}^{k+1}$ is the minimum of $F({\mathcal{X}},{\mathcal{X}}^k,{\mathcal{X}}^k,
\ldots,{\mathcal{X}}^k)$. Thus, it can be clearly seen that if $\|\mathscr{A}\|^2\leq \frac{1}{\tau}$, fixing the latter $N$ variables in (\ref{sfunction}) and optimizing (\ref{sfunction}) with respect to the first variable will then decrease the value of the original objective function $f$. In other words, if $\|\mathscr{A}\|^2\leq \frac{1}{\tau}$, the iterative scheme solving problem (\ref{p1}) could be:
$${\mathcal{X}}^{k+1}=\mathop{\mathrm{argmin}}\limits_{\mathcal{X}\in \mathscr{C}} ~F({\mathcal{X}},{\mathcal{X}}^k,\ldots,{\mathcal{X}}^k),$$
where $\mathscr{C}$ denotes the set $\{\mathcal{X} | \mbox{\rm rank}(\mathcal{X}_{<i>})\leq r_i, ~i=1,2,\cdots,N \}$.

Note that, $F({\mathcal{X}},{\mathcal{X}}^k,\ldots,{\mathcal{X}}^k)$ can be written as:
\begin{eqnarray*}
\begin{array}{lc}
F(\mathcal{X},\mathcal{X}^k,\mathcal{X}^k,...,\mathcal{X}^k)
=\frac{1}{\tau}\left(\|\mathcal{X}\|_F^2-2\langle\mathcal{X}, \mathcal{X}^k-\tau\mathscr{A}^*(\mathscr{A}
(\mathcal{X}^k)-\bm{b})\rangle\right)    \vspace{1mm}  \\
~~~~~~~~~~~~~~~~~~~~~~~~~~~~~~~~~~~~~~~~~~~~+\|\bm{b}\|_2^2-\|\mathscr{A}(\mathcal{X}^k)\|_2^2-\frac{1}{\tau}\|\mathcal{X}^k\|_F^2.
\end{array}
\end{eqnarray*}
Then, it is easy to see that the solution of the following problem (without the constraints $\mbox{\rm rank}({\mathcal{X}}_{<i>})\leq r_i$ for any $i \in \{1,2,\cdots,N\}$):
\begin{eqnarray*}
{\mathcal{X}}^{k+1}=\mathop{\mathrm{argmin}}\limits_{\mathcal{X}\in \mathbb{T}}~ F({\mathcal{X}},{\mathcal{X}}^k,\ldots,{\mathcal{X}}^k)
\end{eqnarray*}
is
\begin{eqnarray*}
{\mathcal{X}}^{k+1}=\mathcal{X}^k-\tau\mathscr{A}^*(\mathscr{A}(\mathcal{X}^k)-\bm{b}),
\end{eqnarray*}
and the value of $F$ at this time is equal to
\begin{eqnarray*}
-\frac{1}{\tau}\|\mathcal{X}^{k+1}\|_F^2 + \|\bm{b}\|_2^2-\|{\mathscr{A}}(\mathcal{X}^k)\|_2^2-\frac{1}{\tau}\|\mathcal{X}^k\|_F^2.
\end{eqnarray*}
Therefore, the minimum of $F(\mathcal{X},\mathcal{X}^k,\mathcal{X}^k,\ldots,\mathcal{X}^k)$ with the constraints $\mbox{\rm rank}({\mathcal{X}}_{<i>})\leq r_i$ for any $i \in \{1,2,\cdots,N\}$ is then achieved at the best rank-$(r_1, r_2,...,r_N)$
approximation of $\mathcal{X}^{k+1}$, i.e.,
$${\mathcal{X}}^{k+1}=\arg\min_{\mathcal{X}\in \mathscr{C}} F({\mathcal{X}},{\mathcal{X}}^k,\ldots,{\mathcal{X}}^k)
=H_r\left(\mathcal{X}^k-\tau\mathscr{A}^*(\mathscr{A}(\mathcal{X}^k)-\bm{b})\right),$$
where $H_r(\mathcal{Y})$ means the best rank-$(r_1, r_2,...,r_N)$ approximation of $\mathcal{Y}$.
However, for a tensor $\mathcal{Y}$, its best rank-$(r_1, r_2,...,r_N)$
approximation is hard to be obtained in general. Thus, here we use another form to replace the exact best $(r_1, r_2,...,r_N)$-rank approximation. Our method is first to compute the
best rank-$r_i$ approximation of ${\mathcal{X}}^k_{<i>}$ for each $i$, then update $\mathcal{X}^{k+1}$ by the convex combination of the
refoldings of these rank-$r_i$ matrices, i.e.,
\begin{eqnarray}\label{scheme1}
{\mathcal{X}^{k+1}}=\sum_{i=1}^N w_i\mathscr{B}_i^*(R_{r_i}(\mathscr{B}_i({\mathcal{X}^{k}}-\tau
{\mathscr{A}}^*({\mathscr{A}}({\mathcal{X}}^{k})-\bm{b})))),
\end{eqnarray}
where ${\mathscr{B}}_i({\mathcal{X}})$ denotes the mode-$i$ unfolding of a tensor ${\mathcal{X}}\in {\mathbb{T}}$
for any $i\in \{1,2,\ldots,N\}$:
\begin{eqnarray*}
{\mathscr{B}}_i: {\mathbb{T}}\rightarrow {\mathbb{R}}^{n_i\times J_i}~~ \mathrm{with}~~ {\mathscr{B}}_i({\mathcal{X}}):={\mathcal{X}}_{<i>},
\end{eqnarray*}
and $\mathscr{B}_i^*$ denotes the adjoint operator of $\mathscr{B}_i$.

Now, we are ready to present the iterative hard thresholding algorithm for solving (\ref{p1}) as below and its convergence analysis
will be presented in the next section.
\begin{table}[H]
\centering  \tabcolsep 15pt
\begin{tabular}{l}
\hline
\textbf{Algorithm 3.1} ~Iterative hard thresholding for MnRA  \\
\hline
\textbf{Input}: $\mathscr{A},~\mathbf{b},~\mathcal{X}^0,~r_i,~\tau$  \vspace{0.5mm}\\
\qquad \textbf{while} not converged, \textbf{do}   \vspace{0.5mm}  \\
\qquad \qquad ${\mathcal{Y}}^k={\mathcal{X}}^k-\tau{\mathscr{A}}^*({\mathscr{A}}({\mathcal{X}}^k)-\bm{b})$ \vspace{0.5mm} \\
\qquad \qquad \textbf{for} $i = 1 : N$  \\
\qquad \qquad \qquad $M_i^k=R_{r_i}(\mathscr{B}_i({\mathcal{Y}}^k))$   \\
\qquad \qquad \textbf{end}  \\
\qquad \qquad ${\mathcal{X}}^{k+1}=\sum_{i=1}^N w_i\mathscr{B}_i^*(M_i^k)$  \\
\qquad  \textbf{end while} \vspace{0.5mm} \\
\textbf{Output}: $\mathcal{X}$ \\
\hline
\end{tabular}
\end{table}

\section{Convergence Results}

In this section, we concentrate on the convergence of Algorithm 3.1. Let ${\bm b}={\mathscr{A}}({\mathcal{X}}^*)$ with
${\mathcal{X}}^*\in \mathbb{T}$ being the true data to be restored, and it is known that ${\mathcal{X}}^*$ is an $(r_1,r_2,...,r_N)$-rank tensor, i.e., the rank of the mode-$i$ unfolding of ${\mathcal{X}}^*$ is not greater than $r_i$ for each $i\in \{1,\ldots,N\}$. Algorithm 3.1 is used to recover the true data ${\mathcal{X}}^*$. Next, we will prove the following inequality to characterize the performance of the proposed algorithm:
\begin{eqnarray*}
\|{\mathcal{X}}^*-{\mathcal{X}}^{k+1}\|_F\leq \alpha\|{\mathcal{X}}^*-\mathcal{X}^{k}\|_F,
\end{eqnarray*}
where ${\mathcal{X}}^k$ denotes the $k$-th iteration generated by Algorithm 3.1 and $\alpha \in (0,1)$ denotes the rate
at which the sequence converges to ${\mathcal{X}}^*$. The analysis begins by giving the following
concepts, including the restricted isometry constant (RIC) of a linear transformation and singular value decomposition (SVD) basis of a matrix.
\begin{definition}[Definition 1 in \citet{shzl2013}]\label{Df-t-ric}
Let ${\bf r}=(r_1,\ldots,r_N)$. The restricted isometry constant $\delta_{\bf r}$ of a linear transformation ${\mathscr {A}}: \mathbb{T}$
$\rightarrow \mathbb{R}^p$ with order ${\bf r}$ is defined as the smallest constant such that
\begin{eqnarray}\label{rip}
(1-\delta_{\bf r})\|\mathcal{X}\|_F^2\leq \|{\mathscr{A}}(\mathcal{X})\|_2^2\leq
(1+\delta_{\bf r})\|\mathcal{X}\|_F^2
\end{eqnarray}
holds for any $(r_1,\ldots,r_N)$-rank tensor $\mathcal{X}$, i.e., rank$(\mathcal{X}_{<i>})\leq r_i$ for all $i\in \{1,\ldots,N\}$.
\end{definition}

\begin{definition}[Definition 2.5 in \citet{gm10}]
Assume that the $r$-rank matrix $X_r$ has the SVD $X_r=\sum_{i=1}^r \sigma_i {\bm u}_i {\bm v}_i^{\top}$. $\Gamma:=
\{{\bm u}_1{\bm v}_1^{\top}, {\bm u}_2{\bm v}_2^{\top},\cdots,{\bm u}_r{\bm v}_r^{\top}\}$ is called an SVD basis for the matrix $X_r$.
\end{definition}
It's easy to see that the elements in the subspace spanned by the SVD basis are all $r$-rank matrices.
Based on these definitions, we give the following important lemma, which paves the way towards the convergence
of Algorithm 3.1.

\begin{lemma}[Lemma 4.1 in \citet{gm10}]\label{lemma-con}
Suppose $X:=R_r(Y)$ is the best $r$-rank approximation to the matrix $Y$ and $\Gamma$ is an SVD basis
of $X$. Then, for any $r$-rank matrix $X_r$ and SVD basis $\Gamma_r$ of $X_r$, we have
\begin{eqnarray*}
\|\mathcal{P}_{\mathbb{H}} (X)-\mathcal{P}_{\mathbb{H}} (Y)\|_F\leq \|\mathcal{P}_{\mathbb{H}} (X_r)-
\mathcal{P}_{\mathbb{H}} (Y)\|_F,
\end{eqnarray*}
where $\mathbb{H}$ is any orthonormal set of matrices satisfying $\mbox{\rm span}(\Gamma\cup\Gamma_r)
\subseteq\mbox{\rm span}(\mathbb{H})$, and $\mathcal{P}_{\mathbb{H}}(X)$ is the projection of $X$ onto the subspace spanned by
$\mathbb{H}$..
\end{lemma}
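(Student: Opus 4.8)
The plan is to reduce the asserted inequality to an elementary Pythagorean argument in the Frobenius inner product, exploiting that $\mathbb{H}$ is an orthonormal set so that $\mathcal{P}_{\mathbb{H}}$ is the orthogonal projection onto the subspace $V:=\mbox{\rm span}(\mathbb{H})$. First I would record the two observations that make everything work. Writing $X=\sum_{i=1}^{r}\sigma_i\bm{u}_i\bm{v}_i^{\top}$ for the SVD whose rank-one terms constitute $\Gamma$, we have $X\in\mbox{\rm span}(\Gamma)\subseteq V$, and likewise $X_r\in\mbox{\rm span}(\Gamma_r)\subseteq V$; hence $\mathcal{P}_{\mathbb{H}}(X)=X$ and $\mathcal{P}_{\mathbb{H}}(X_r)=X_r$, so that the claimed bound is equivalent to $\|X-\mathcal{P}_{\mathbb{H}}(Y)\|_F\le\|X_r-\mathcal{P}_{\mathbb{H}}(Y)\|_F$.

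Next I would decompose $Y$ orthogonally. Since $\mathcal{P}_{\mathbb{H}}(Y)\in V$ and the residual $Y-\mathcal{P}_{\mathbb{H}}(Y)$ is orthogonal to $V$ in the Frobenius inner product, for any $Z\in V$ one has
\[
\|Z-Y\|_F^2=\|Z-\mathcal{P}_{\mathbb{H}}(Y)\|_F^2+\|Y-\mathcal{P}_{\mathbb{H}}(Y)\|_F^2 .
\]
Applying this with $Z=X$ and then with $Z=X_r$ (both admissible since $X,X_r\in V$) produces two identities sharing the common constant $c:=\|Y-\mathcal{P}_{\mathbb{H}}(Y)\|_F^2$, namely $\|X-Y\|_F^2=\|X-\mathcal{P}_{\mathbb{H}}(Y)\|_F^2+c$ and $\|X_r-Y\|_F^2=\|X_r-\mathcal{P}_{\mathbb{H}}(Y)\|_F^2+c$.

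Finally I would invoke the optimality of $X$. Because $X=R_r(Y)$ is a best $r$-rank approximation to $Y$ while $X_r$ is itself an $r$-rank matrix, hence a feasible competitor, we have $\|X-Y\|_F\le\|X_r-Y\|_F$. Subtracting the common constant $c$ from the two Pythagorean identities then gives $\|X-\mathcal{P}_{\mathbb{H}}(Y)\|_F^2\le\|X_r-\mathcal{P}_{\mathbb{H}}(Y)\|_F^2$, and taking square roots, together with $\mathcal{P}_{\mathbb{H}}(X)=X$ and $\mathcal{P}_{\mathbb{H}}(X_r)=X_r$, yields exactly the stated inequality.

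The argument is short, and the only point that genuinely needs care — which I would regard as the crux — is the bookkeeping that $X$ and $X_r$ really do lie in $\mbox{\rm span}(\mathbb{H})$ (this is precisely why the hypothesis $\mbox{\rm span}(\Gamma\cup\Gamma_r)\subseteq\mbox{\rm span}(\mathbb{H})$ is imposed) and that $\mathbb{H}$ being merely an orthonormal set, not necessarily a basis of the whole matrix space, still makes $\mathcal{P}_{\mathbb{H}}$ an honest orthogonal projection, so that the residual $Y-\mathcal{P}_{\mathbb{H}}(Y)$ is orthogonal to both $X$ and $X_r$ and the constant $c$ appearing in the two identities is truly the same.
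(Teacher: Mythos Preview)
Your proof is correct. Note, however, that the paper does not actually prove this lemma itself: it is stated with attribution to Lemma~4.1 of Goldfarb \& Ma (2011) and then used as a black box in the proof of Theorem~\ref{convergence1}. Your Pythagorean argument --- observing that $X,X_r\in\mbox{\rm span}(\mathbb{H})$ so that $\mathcal{P}_{\mathbb{H}}$ fixes them, then writing $\|Z-Y\|_F^2=\|Z-\mathcal{P}_{\mathbb{H}}(Y)\|_F^2+\|Y-\mathcal{P}_{\mathbb{H}}(Y)\|_F^2$ for $Z\in\{X,X_r\}$, subtracting the common residual, and invoking the Eckart--Young optimality of $X=R_r(Y)$ --- is exactly the argument given in the cited reference, so there is nothing further to compare against within the present paper.
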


Now we prove the convergence of Algorithm 3.1 under proper conditions.
\begin{theorem}\label{convergence1}
Let ${\mathcal{X}}^*\in \mathbb{T}$ be the original data to be restored with ${\bm b}={\mathscr{A}}({\mathcal{X}}^*)$, and ${\mathcal{X}}^*$ is an $(r_1,r_2,...,r_N)$-rank tensor.
Set $J_i=\lceil\frac{n_i}{r_i}\rceil$ and $J=\max\limits_{1\leq i\leq n} J_i$, where $\lceil\cdot\rceil$ means rounding up.
Suppose that $\frac{3}{4}<\tau< \frac{5}{4}$, and let $\delta_{{\bf 3r}^i}$ be the RIC of ${\mathscr{A}}$ with order ${\bf 3r}^i=(n_1,\ldots,n_{i-1},3r_i,n_{i+1},\ldots,n_N)$
and $\delta=\max \limits_{1\leq i\leq N} \delta_{{\bf 3r}^i}$. If $\delta<\frac{\frac{1}{4}-|1-\tau|}{\tau(1+\lceil\log_2 J\rceil)}$, then the iterative sequence $\{{\mathcal{X}}^{k}\}$ generated by Algorithm 3.1 is linearly convergent to the original data ${\mathcal{X}}^*$ with rate $\frac{1}{2}$, i.e.,
\begin{eqnarray}\label{thm-0}
\|{\mathcal{X}}^*-{\mathcal{X}}^{k+1}\|_F\leq \frac{1}{2}\|{\mathcal{X}}^*-\mathcal{X}^{k}\|_F.
\end{eqnarray}
Moreover, iterating the above inequality, we have
\begin{eqnarray*}
\|{\mathcal{X}}^*-{\mathcal{X}}^k\|_F\leq 2^{-k}\|{\mathcal{X}}^*-\mathcal{X}^0\|_F.
\end{eqnarray*}
\end{theorem}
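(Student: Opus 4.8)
The plan is to track the error tensor $\mathcal{E}^k := \mathcal{X}^k - \mathcal{X}^*$ and show it contracts by a factor $\tfrac12$ per iteration. Writing $\mathcal{Y}^k = \mathcal{X}^k - \tau\mathscr{A}^*(\mathscr{A}(\mathcal{X}^k)-\bm b)$ and using $\bm b = \mathscr{A}(\mathcal{X}^*)$, we have $\mathcal{Y}^k - \mathcal{X}^* = (\mathcal{I} - \tau\mathscr{A}^*\mathscr{A})(\mathcal{X}^k - \mathcal{X}^*)$. Since $\mathcal{X}^{k+1} = \sum_{i=1}^N w_i \mathscr{B}_i^*(M_i^k)$ with $M_i^k = R_{r_i}(\mathscr{B}_i(\mathcal{Y}^k))$, and $\mathscr{B}_i^*\mathscr{B}_i = \mathcal{I}$, the convexity of $\|\cdot\|_F^2$ (or the triangle inequality together with $\sum w_i = 1$) reduces the problem to bounding, for each mode $i$,
\begin{eqnarray*}
\big\|\mathscr{B}_i^*\big(R_{r_i}(\mathscr{B}_i(\mathcal{Y}^k))\big) - \mathcal{X}^*\big\|_F = \big\|R_{r_i}(Y^k_{<i>}) - X^*_{<i>}\big\|_F,
\end{eqnarray*}
where $X^*_{<i>}$ is an $r_i$-rank matrix. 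So the whole argument localizes to a single mode-$i$ unfolding, which is exactly the matrix situation analyzed in \citet{gm10}.

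Within mode $i$, I would first apply Lemma \ref{lemma-con} with $X = R_{r_i}(Y^k_{<i>})$, $X_r = X^*_{<i>}$, and $\mathbb{H}$ an orthonormal set whose span contains the SVD bases of both $X$ and $X^*_{<i>}$ as well as that of $X^k_{<i>}$; note $\mathrm{span}(\mathbb{H})$ consists of matrices of rank at most $3r_i$, which is why the RIC of order ${\bf 3r}^i$ enters. From the lemma and the triangle inequality,
\begin{eqnarray*}
\big\|R_{r_i}(Y^k_{<i>}) - X^*_{<i>}\big\|_F \leq 2\big\|\mathcal{P}_{\mathbb{H}}(Y^k_{<i>} - X^*_{<i>})\big\|_F.
\end{eqnarray*}
Then $\mathcal{P}_{\mathbb{H}}(Y^k_{<i>} - X^*_{<i>})$ corresponds, after refolding, to $\mathcal{P}_{\mathbb{H}}$ applied to $(\mathcal{I} - \tau\mathscr{A}^*\mathscr{A})\mathcal{E}^k$; I would split $\mathcal{E}^k$ itself into its projection onto $\mathrm{span}(\mathbb{H})$ and the complement. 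The projected-onto-$\mathbb{H}$ part, after applying $\mathcal{I}-\tau\mathscr{A}^*\mathscr{A}$ and $\mathcal{P}_{\mathbb{H}}$ again, is controlled by $|1-\tau| + \tau\delta$ times $\|\mathcal{E}^k\|_F$ using the RIP on rank-$\leq 3r_i$ tensors (the standard bound $\|\mathcal{P}_{\mathbb{H}}(\mathcal{I}-\tau\mathscr{A}^*\mathscr{A})\mathcal{P}_{\mathbb{H}}\| \leq |1-\tau|+\tau\delta$). The delicate part is the component of $\mathcal{E}^k$ orthogonal to $\mathrm{span}(\mathbb{H})$: this is where the factor $\lceil\log_2 J\rceil$ comes from. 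That orthogonal component still has mode-$i$ rank bounded (by $n_i$), but one must decompose it into $O(\log_2 J_i)$ pieces each of rank $\leq r_i$ — a dyadic bucketing of singular values — and bound $\mathcal{P}_{\mathbb{H}}(\tau\mathscr{A}^*\mathscr{A})$ applied to each piece by $\tau\delta$ times its norm, summing to at most $\tau\delta\lceil\log_2 J\rceil\|\mathcal{E}^k\|_F$ after a Cauchy–Schwarz step that absorbs the bucket count.

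Assembling these estimates gives $\|\mathcal{X}^{k+1}-\mathcal{X}^*\|_F \leq \big(|1-\tau| + \tau\delta(1+\lceil\log_2 J\rceil)\big)\|\mathcal{X}^k - \mathcal{X}^*\|_F$ (possibly with a constant that the hypothesis $\tfrac34<\tau<\tfrac54$ and the stated bound on $\delta$ are calibrated to make $\leq\tfrac12$); the hypothesis $\delta < \frac{\tfrac14 - |1-\tau|}{\tau(1+\lceil\log_2 J\rceil)}$ is precisely what forces the bracketed quantity below $\tfrac14 + \tfrac14 = \tfrac12$... more carefully, below $|1-\tau| + (\tfrac14 - |1-\tau|) = \tfrac14$ for the perturbation part, and combined with a factor-$2$ from Lemma \ref{lemma-con} gives $\tfrac12$. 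I expect the main obstacle to be the $\log_2 J$ bookkeeping: correctly choosing the orthonormal set $\mathbb{H}$ so that the dyadic decomposition of the out-of-span error is compatible with the RIP order $3r_i$, and making the Cauchy–Schwarz argument tight enough that only a single $\lceil\log_2 J\rceil$ factor (not its square) appears. Once the per-mode contraction with that constant is established, the convex combination over $i$ preserves it, (\ref{thm-0}) follows, and iterating yields $\|\mathcal{X}^*-\mathcal{X}^k\|_F \leq 2^{-k}\|\mathcal{X}^*-\mathcal{X}^0\|_F$.
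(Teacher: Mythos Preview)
Your overall architecture matches the paper's---localize to each mode, apply Lemma~\ref{lemma-con}, split via the RIP, and handle the ``tail'' of the error by a rank-$r_i$ block decomposition---but there is a genuine gap in your choice of $\mathbb{H}$.

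You propose that $\mathbb{H}$ contain the SVD bases of $R_{r_i}(Y^k_{<i>})$, $X^*_{<i>}$, \emph{and} $X^k_{<i>}$, and you assert that $\mathrm{span}(\mathbb{H})$ then consists of rank-$\leq 3r_i$ matrices. This fails in the tensor setting: by Algorithm~3.1, $\mathcal{X}^k=\sum_j w_j\mathscr{B}_j^*(M_j^{k-1})$ is a convex combination over \emph{all} modes, and while $M_j^{k-1}$ has rank $\leq r_j$ as a mode-$j$ matrix, its refolding followed by mode-$i$ unfolding for $i\neq j$ carries no rank bound at all. Hence $X^k_{<i>}$ can have rank up to $n_i$, and your $\mathbb{H}$ would span essentially the whole matrix space---destroying the RIP argument. (In the pure matrix case of \citet{gm10} one has $X^k=R_r(Y^{k-1})$, so rank$(X^k)\leq r$ automatically; that shortcut is unavailable here.) Your proposal is also internally inconsistent on this point: if $X^k_{<i>}$ really sat inside $\mathrm{span}(\mathbb{H})$, then $X^k_{<i>}-X^*_{<i>}$ would lie entirely in $\mathrm{span}(\mathbb{H})$ and there would be no ``orthogonal component'' left to dyadically bucket.

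The paper's remedy is to take $\mathbb{H}_i^k$ spanned only by the SVD bases of $X^*_i$ and $M_i^k$ (each rank $\leq r_i$), so $\mathrm{span}(\mathbb{H}_i^k)$ consists of $2r_i$-rank matrices. The difference $X^*_i-X_i^k$ is then genuinely not in that span; rather than projecting it, the paper takes its full SVD and partitions the singular values into $J_i=\lceil n_i/r_i\rceil$ consecutive blocks $Z_{T_l}$, each of rank $\leq r_i$. The cross term $\mathcal{P}_{\mathbb{H}_i^k}\mathscr{B}_i\mathscr{A}^*\mathscr{A}\mathscr{B}_i^*(I-\mathcal{P}_{\mathbb{H}_i^k})Z_{T_l}$ now pairs a rank-$2r_i$ subspace with a rank-$r_i$ piece, which is exactly why the RIC of order ${\bf 3r}^i$ appears; summing over the sorted blocks $Z_{T_l}$ produces the $\lceil\log_2 J_i\rceil$ factor. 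So the $3r_i$ does not come from enlarging $\mathbb{H}$, but from the interaction between the fixed $2r_i$-dimensional $\mathbb{H}_i^k$ and each rank-$r_i$ block of the error.
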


\begin{proof}
To facilitate, we denote $X^*_i:=\mathscr{B}_i(\mathcal{X}^*)$ and $X_i^k:=\mathscr{B}_i({\mathcal{X}}^k)$ for all  $i\in\{1,2...,N\}$ in the proof.
Since ${\mathcal{X}}^*\in \mathbb{T}$ is a $(r_1,r_2,...,r_N)$-rank tensor, we have $X^*_{i}$ is an $r_i$-rank matrix, i.e., the rank of $X^*_{i}$ is not greater than $r_i$. Note that from Algorithm 3.1, $M_i^k=R_{r_i}(\mathscr{B}_i({\mathcal{Y}}^k))$ is also an $r_i$-rank matrix for all $i\in \{1,2,\cdots,N\}$. Thus, for each $i\in\{1,2...,N\}$, there exist the SVD basises for $X^*_{i}$ and $M_i^k$, denoted by $\Gamma_i^*$ and $\Gamma_i^k$,
respectively. And let $\mathbb{H}_i^k$ denote an orthonormal basis of the subspace span$(\Gamma_i^*\cup\Gamma_i^k)$. Then the subspace spanned by $\mathbb{H}_i^k$, containing $X^*_{i}$ and $M_i^k$, is a set of $2r_i$-rank matrices. Setting
$\mathcal{P}_{\mathbb{H}_i^k}(Z)$ to be the projection of $Z$ onto the subspace spanned by
$\mathbb{H}_i^k$. Then, $\mbox{\rm rank}(\mathcal{P}_{\mathbb{H}_i^k}(Z))\leq 2r_i$.

Based on the aforementioned notations and the iterative scheme of Algorithm 3.1, we have
\begin{eqnarray}\label{thm-1}
&&\|{\mathcal{X}}^*-{\mathcal{X}}^{k+1}\|_F=\|{\mathcal{X}}^*-\sum_{i=1}^N w_i \mathscr{B}_i^*(M_i^k)\|_F     \nonumber \\
&\leq& \sum_{i=1}^N w_i \|X^*_i-M_i^k\|_F= \sum_{i=1}^N w_i \|\mathcal{P}_{\mathbb{H}_i^k}(X^*_{i})-\mathcal{P}_{\mathbb{H}_i^k}(M_i^k)\|_F    \nonumber \\
&\leq& \sum_{i=1}^N w_i \left(\|\mathcal{P}_{\mathbb{H}_i^k}(X^*_{i})-\mathcal{P}_{\mathbb{H}_i^k}(\mathscr{B}_i
({\mathcal{Y}}^k))\|_F+\|\mathcal{P}_{\mathbb{H}_i^k}(\mathscr{B}_i({\mathcal{Y}}^k))-\mathcal{P}_{\mathbb{H}_i^k}
(M_i^k)\|_F\right)        \nonumber \\
&\leq& 2\sum_{i=1}^N w_i \|\mathcal{P}_{\mathbb{H}_i^k}(X^*_{i})-\mathcal{P}_{\mathbb{H}_i^k}(\mathscr{B}_i
({\mathcal{Y}}^k))\|_F,
\end{eqnarray}
where the first and third inequality follow from the triangle inequality, the second equality
follows from $X^*_{i}, M_i^k\in \mbox{\rm span}(\mathbb{H}_i^k)$, and the last
inequality follows from Lemma \ref{lemma-con}. Furthermore, for each index $i\in\{1,2...,N\}$, we have
\begin{eqnarray}\label{thm-2}
&&  \|\mathcal{P}_{\mathbb{H}_i^k}(X^*_{i})-\mathcal{P}_{\mathbb{H}_i^k}(\mathscr{B}_i({\mathcal{Y}}^k))\|_F   \nonumber \\
&=& \|\mathcal{P}_{\mathbb{H}_i^k}(X^*_{i})-\mathcal{P}_{\mathbb{H}_i^k}(X_i^k-\tau\mathscr{B}_i{\mathscr{A}}^*
    ({\mathscr{A}}({\mathcal{X}}^k)-{\bm b}))\|_F    \nonumber \\
&=& \|\mathcal{P}_{\mathbb{H}_i^k}(X^*_{i}-X_i^k)-\tau\mathcal{P}_{\mathbb{H}_i^k}\mathscr{B}_i{\mathscr{A}}^*
    {\mathscr{A}}\mathscr{B}_i^*(X^*_{i}-X_i^k)\|_F   \nonumber\\
&\leq& |1-\tau|\|\mathcal{P}_{\mathbb{H}_i^k}(X^*_{i}-X_i^k)\|_F
+\tau\|(I-\mathcal{P}_{\mathbb{H}_i^k}\mathscr{B}_i{\mathscr{A}}^*{\mathscr{A}}\mathscr{B}_i^*
    \mathcal{P}_{\mathbb{H}_i^k})(X^*_{i}-X_i^k)\|_F    \nonumber \\
&& +\tau\|\mathcal{P}_{\mathbb{H}_i^k}\mathscr{B}_i{\mathscr{A}}^*{\mathscr{A}}\mathscr{B}_i^*
   (I-\mathcal{P}_{\mathbb{H}_i^k})(X^*_{i}-X_i^k)\|_F.
\end{eqnarray}
Therefore, in order to prove (\ref{thm-0}), i.e.,
$\|{\mathcal{X}}^*-{\mathcal{X}}^{k+1}\|_F\leq \frac{1}{2}\|{\mathcal{X}}^*-\mathcal{X}^{k}\|_F$, we need to estimate the upper bounds of the three terms in the right-hand side of (\ref{thm-2}), respectively. The specifical analysis is as follows:
\begin{itemize}
\item[(a)] ({\bf Estimation on the upper bound of the term} $|1-\tau|\|\mathcal{P}_{\mathbb{H}_i^k}(X^*_{i}-X_i^k)\|_F$)

Utilizing the non-expansion of projection operator, it's simple to estimate an upper bound of term $\|\mathcal{P}_{\mathbb{H}_i^k}(X^*_{i}-X_i^k)\|_F$ in the right-hand side of (\ref{thm-2}). This is given by
\begin{eqnarray}\label{thm-3}
|1-\tau|\|\mathcal{P}_{\mathbb{H}_i^k}(X^*_{i}-X_i^k)\|_F\leq |1-\tau| \|X^*_{i}-X_i^k\|_F= |1-\tau| \|\mathcal{X}^*-\mathcal{X}^k\|_F.
\end{eqnarray}

\item[(b)] ({\bf Estimation on the upper bound of} $\tau\|(I-\mathcal{P}_{\mathbb{H}_i^k}\mathscr{B}_i{\mathscr{A}}^*{\mathscr{A}}\mathscr{B}_i^*
\mathcal{P}_{\mathbb{H}_i^k})(X^*_{i}-X_i^k)\|_F$)

Note that, for any matrix $Z\in\mbox{\rm span}({\mathbb{H}_i^k})$ with appropriate size, $\mbox{\rm rank}(\mathcal{P}_{\mathbb{H}_i^k}(Z))\leq 2r_i$. Thus, $\mathscr{B}_i^*\mathcal{P}_{\mathbb{H}_i^k}(Z)$ is a ${\bf 2r}^i$-rank tensor, where ${\bf 2r}^i=(n_1,\ldots,n_{i-1},2r_i,n_{i+1},\ldots,n_N)$. Then, based on (\ref{Df-t-ric}),
\begin{eqnarray*}
(1-\delta_{{\bf 2r}^i})\|\mathscr{B}_i^*\mathcal{P}_{\mathbb{H}_i^k}(Z)\|_F^2
\leq\|\mathscr{A}\mathscr{B}_i^*\mathcal{P}_{\mathbb{H}_i^k}(Z)\|_F^2
\leq(1+\delta_{{\bf 2r}^i})\|\mathscr{B}_i^*\mathcal{P}_{\mathbb{H}_i^k}(Z)\|_F^2,
\end{eqnarray*}
which implies that
\begin{eqnarray*}
(1-\delta_{{\bf 2r}^i})\|\mathcal{P}_{\mathbb{H}_i^k}(Z)\|_F\leq\|\mathcal{P}_{\mathbb{H}_i^k}(\mathscr{B}_i\mathscr{A}^*\mathscr{A}
\mathscr{B}_i^*\mathcal{P}_{\mathbb{H}_i^k}(Z))\|_F\leq(1+\delta_{{\bf 2r}^i})\|\mathcal{P}_{\mathbb{H}_i^k}(Z)\|_F.
\end{eqnarray*}
Therefore, we can obtain that
\begin{eqnarray}\label{thm-4}
\tau\|(I-\mathcal{P}_{\mathbb{H}_i^k}\mathscr{B}_i{\mathscr{A}}^*{\mathscr{A}}\mathscr{B}_i^*\mathcal{P}_{\mathbb{H}_i^k})
(X^*_{i}-X_i^k)\|_F \leq \tau\delta_{{\bf 2r}^i}\|X^*_{i}-X_i^k\|_F=\tau\delta_{{\bf 2r}^i}\|\mathcal{X}^*-\mathcal{X}^k\|_F.
\end{eqnarray}

\item[(c)] ({\bf Estimation on the upper bound of} $\tau\|\mathcal{P}_{\mathbb{H}_i^k}(\mathscr{B}_i{\mathscr{A}}^*{\mathscr{A}}\mathscr{B}_i^*
(I-\mathcal{P}_{\mathbb{H}_i^k})(X^*_{i}-X_i^k))\|_F$)

Let the SVD of $X^*_{i}-X_i^k$ be $X^*_{i}-X_i^k=U \mathrm{Diag}
(\sigma)V^{\top}$, where $\sigma=(\sigma_1,\cdots,\sigma_{m-r_i})^{\top}$ is the vector of the singular values of
$X^*_{i}-X_i^k$ with $\sigma_1\geq \cdots\geq \sigma_{m-r_i}\geq 0$. We decompose $\sigma$ into a sum of vectors
$\sigma_{T_l}\;(l=1,2,\cdots)$, where disjoint index sets $T_1\cup T_2\cup \cdots \cup T_J = \{1,2,\cdots,m-r_i\}$ and
the sparsity of each $T_l$ is $r_i$ (except possibly $T_J$). Then, $Z_{T_1}$ is the part of $X^*_{i}-X_i^k$ corresponding
to the $r_i$ largest singular values, $Z_{T_2}$ is the part corresponding to the next $r_i$ largest singular values, and
so on. Thus, we have $X^*_{i}-X_i^k=U(\sum_{l=1}^{J_i} Z_{T_l}^k)V^{\top}$, where $Z_{T_l}^k$ is an $r_i$-rank matrix, $\|\mathscr{B}_i({\mathcal{X}}^*-\mathcal{X}^k)\|_F^2=\sum_{l=1}^{J_i} \|Z_{T_l}^k\|_F^2$, and $J_i=\lceil\frac{n_i}{r_i}\rceil$.
Then, we have
\begin{eqnarray}\label{thm-5}
&&\tau\|\mathcal{P}_{\mathbb{H}_i^k}(\mathscr{B}_i{\mathscr{A}}^*{\mathscr{A}}\mathscr{B}_i^*(I-\mathcal{P}_{\mathbb{H}_i^k})
(X^*_{i}-X_i^k))\|_F\nonumber\\
&\leq&\tau\sum_{l=1}^{J_i}\|\mathcal{P}_{\mathbb{H}_i^k}(\mathscr{B}_i{\mathscr{A}}^*{\mathscr{A}}\mathscr{B}_i^*(I-\mathcal{P}_{\mathbb{H}_i^k})
(Z_{T_l}^k))\|_F      \nonumber   \\
&\leq& \tau\delta_{{\bf 3r}^i}\sum_{l=1}^{J_i} \|(I-\mathcal{P}_{\mathbb{H}_i^k})Z_{T_l}^k\|_F        \nonumber     \\
&\leq& \tau\delta_{{\bf 3r}^i}\lceil\log_2 {J_i}\rceil \|{\mathcal{X}}^*-\mathcal{X}^k\|_F,
\end{eqnarray}
where the first inequality follows from the triangle inequality, and the second inequality follows from the following facts:
\begin{eqnarray*}
&&\|\mathcal{P}_{\mathbb{H}_i^k}(\mathscr{B}_i{\mathscr{A}}^*{\mathscr{A}}\mathscr{B}_i^*(I-\mathcal{P}_{\mathbb{H}_i^k})
(Z_{T_l}^k))\|_F\nonumber\\
&=&\max_{\|W\|_F=1}\langle W,\mathcal{P}_{\mathbb{H}_i^k}(\mathscr{B}_i{\mathscr{A}}^*{\mathscr{A}}\mathscr{B}_i^*(I-\mathcal{P}_{\mathbb{H}_i^k})
(Z_{T_l}^k))\rangle      \nonumber   \\
&=& \max_{\|W\|_F=1}\langle {\mathscr{A}}\mathscr{B}_i^*\mathcal{P}_{\mathbb{H}_i^k}(W),{\mathscr{A}}\mathscr{B}_i^*(I-\mathcal{P}_{\mathbb{H}_i^k})
(Z_{T_l}^k))\rangle    \nonumber     \\
&\leq& \max_{\|W\|_F=1} \delta_{{\bf 3r}^i} \|W\|_F\|Z_{T_l}^k\|_F \nonumber     \\
&=& \delta_{{\bf 3r}^i} \|Z_{T_l}^k\|_F,
\end{eqnarray*}
\end{itemize}

Therefore, by utilizing the results of items (a), (b), and (c), i.e., by combining (\ref{thm-3}), (\ref{thm-4}) and (\ref{thm-5}),
we can further obtain that for each index $i\in\{1,2...,N\}$
\begin{eqnarray}\label{thm-6}
\|\mathcal{P}_{\mathbb{H}_i^k}(X^*_{i})-\mathcal{P}_{\mathbb{H}_i^k}(\mathscr{B}_i({\mathcal{Y}}^k))\|_F
\leq (|1-\tau|+\tau\delta_{{\bf 2r}^i}+\tau\delta_{{\bf 3r}^i}\lceil\log_2 {J_i}\rceil)\|{\mathcal{X}}^*-\mathcal{X}^k\|_F.
\end{eqnarray}
Then, let $J=\max \limits_{1\leq i\leq n} J_i$ and $\delta=\max \limits_{1\leq i\leq N}\delta_{{\bf 3r}^i}$. Substituting (\ref{thm-6}) into
(\ref{thm-1}), we have
\begin{eqnarray}\label{thm-7}
\|{\mathcal{X}}^*-{\mathcal{X}}^{k+1}\|_F
&\leq&2\sum_{i=1}^N w_i \|\mathcal{P}_{\mathbb{H}_i^k}(X^*_{i})-
\mathcal{P}_{\mathbb{H}_i^k}(\mathscr{B}_i({\mathcal{Y}}^k))\|_F      \nonumber\\
&\leq& 2(\tau\delta+\tau\delta\lceil\log_2 {J}\rceil+|1-\tau|)\|{\mathcal{X}}^*-\mathcal{X}^k\|_F,
\end{eqnarray}
where the second inequality follows from that fact that $\delta_{{\bf 2r}^i}\leq \delta_{{\bf 3r}^i}$ for all $i$.

By the assumption that $\delta<\frac{\frac{1}{4}-|1-\tau|}{\tau(1+\lceil\log_2 J\rceil)}$, we have
\begin{eqnarray}\label{thm-8}
\|{\mathcal{X}}^*-{\mathcal{X}}^{k+1}\|_F\leq \frac{1}{2}\|{\mathcal{X}}^*-\mathcal{X}^k\|_F.
\end{eqnarray}
Iterating this inequality, we obtain
\begin{eqnarray*}
\|{\mathcal{X}}^*-{\mathcal{X}}^k\|_F\leq 2^{-k}\|{\mathcal{X}}^*-\mathcal{X}^0\|_F.
\end{eqnarray*}

The proof is complete.
\end{proof}
\vspace{3mm}

\textbf{Remark:} Note that we use a parameter $\tau>0$ as the step-size in Algorithm 3.1. Actually, $\tau$ is scoped. In the conditions of Theorem \ref{convergence1}, we assume $\frac{3}{4}<\tau<\frac{5}{4}$ to ensure that $\frac{\frac{1}{4}-|1-\tau|}{\tau(1+\lceil\log_2 J\rceil)}>0$, since $\delta>0$. Actually, observing (\ref{thm-7}) given in the proof of Theorem \ref{convergence1}, in order to ensure the convergence of the iterative sequence, we only need $2(\tau\delta+\tau\delta\lceil\log_2 {J}\rceil+||1-\tau|)<1$, which implies that $\delta<\frac{\frac{1}{2}-|1-\tau|}{\tau(1+\lceil\log_2 J\rceil)}$. Thus, $\delta>0$ implies that $\frac{1}{2}<\tau<\frac{3}{2}$, which is enough to guarantee the convergence of Algorithm 3.1.
\vspace{3mm}

Note that Theorem \ref{convergence1} considers the exact recovery case. However, it is possible to apply Algorithm 3.1 to recover the data corrupted by noise. Next, we will give the recoverability result of Algorithm 3.1 for the noisy case.
\begin{theorem}\label{convergence2}
Let ${\mathcal{X}}^*\in \mathbb{T}$ be the original data to be restored with ${\bm b}={\mathscr{A}}({\mathcal{X}}^*)+\epsilon$, where $\epsilon\in \mathbb{R}^p$ denotes the noise, and ${\mathcal{X}}^*$ is an $(r_1,r_2,...,r_N)$-rank tensor.
Set $J_i=\lceil\frac{n_i}{r_i}\rceil$ and $J=\max\limits_{1\leq i\leq n} J_i$, where $\lceil\cdot\rceil$ means rounding up.
Suppose that $\frac{3}{4}<\tau< \frac{5}{4}$, and let $\delta_{{\bf 3r}^i}$ be the RIC of ${\mathscr{A}}$ with order ${\bf 3r}^i=(n_1,\ldots,n_{i-1},3r_i,n_{i+1},\ldots,n_N)$
and $\delta=\max \limits_{1\leq i\leq N} \delta_{{\bf 3r}^i}$. If $\delta<\frac{\frac{1}{4}-|1-\tau|}{\tau(1+\lceil\log_2 J\rceil)}$, then Algorithm 3.1 will recover an approximation ${\mathcal{X}}^{k}$ satisfying
\begin{eqnarray*}
\|{\mathcal{X}}^*-{\mathcal{X}}^k\|_F\leq 2^{-k}\|{\mathcal{X}}^*-\mathcal{X}^0\|_F+2C\|\epsilon\|_2,
\end{eqnarray*}
where $C=2\tau\sqrt{1+\frac{\frac{1}{4}-|1-\tau|}{\tau(1+\lceil\log_2 J\rceil)}}$ is a constant, which only depends on $\tau$, $r_i$ and $n_i$ ($i = 1,2,\cdots,N$).
\end{theorem}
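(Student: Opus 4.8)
The plan is to adapt the noiseless argument of Theorem \ref{convergence1} to track the additional error introduced by $\epsilon$. First I would set up the same notation: $X_i^*:=\mathscr{B}_i(\mathcal{X}^*)$, $X_i^k:=\mathscr{B}_i(\mathcal{X}^k)$, the SVD bases $\Gamma_i^*,\Gamma_i^k$, and the orthonormal bases $\mathbb{H}_i^k$ of $\mathrm{span}(\Gamma_i^*\cup\Gamma_i^k)$. The chain (\ref{thm-1}) is unaffected, so it still holds that
\begin{eqnarray*}
\|\mathcal{X}^*-\mathcal{X}^{k+1}\|_F\leq 2\sum_{i=1}^N w_i\|\mathcal{P}_{\mathbb{H}_i^k}(X_i^*)-\mathcal{P}_{\mathbb{H}_i^k}(\mathscr{B}_i(\mathcal{Y}^k))\|_F.
\end{eqnarray*}
The change enters only in the gradient step: since now $\bm b=\mathscr{A}(\mathcal{X}^*)+\epsilon$, we have $\mathscr{B}_i(\mathcal{Y}^k)=X_i^k-\tau\mathscr{B}_i\mathscr{A}^*\mathscr{A}\mathscr{B}_i^*(X_i^k-X_i^*)+\tau\mathscr{B}_i\mathscr{A}^*\epsilon$, so the decomposition (\ref{thm-2}) acquires one extra term $\tau\|\mathcal{P}_{\mathbb{H}_i^k}\mathscr{B}_i\mathscr{A}^*\epsilon\|_F$.

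Next I would bound that extra term. Since $\mathscr{B}_i^*\mathcal{P}_{\mathbb{H}_i^k}(W)$ is a ${\bf 2r}^i$-rank tensor for any $W$ of unit Frobenius norm, the same duality computation used for item (c) gives
\begin{eqnarray*}
\|\mathcal{P}_{\mathbb{H}_i^k}\mathscr{B}_i\mathscr{A}^*\epsilon\|_F
=\max_{\|W\|_F=1}\langle\mathscr{A}\mathscr{B}_i^*\mathcal{P}_{\mathbb{H}_i^k}(W),\epsilon\rangle
\leq\sqrt{1+\delta_{{\bf 2r}^i}}\,\|\epsilon\|_2,
\end{eqnarray*}
using the upper RIP bound $\|\mathscr{A}(\mathcal{Z})\|_2\leq\sqrt{1+\delta_{{\bf 2r}^i}}\|\mathcal{Z}\|_F$ on the $2r_i$-rank tensor $\mathscr{B}_i^*\mathcal{P}_{\mathbb{H}_i^k}(W)$ together with Cauchy--Schwarz. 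Combining this with (\ref{thm-3}), (\ref{thm-4}), (\ref{thm-5}) and plugging into the analogue of (\ref{thm-6})--(\ref{thm-7}) yields, with $\delta=\max_i\delta_{{\bf 3r}^i}$ and $J=\max_i J_i$,
\begin{eqnarray*}
\|\mathcal{X}^*-\mathcal{X}^{k+1}\|_F\leq 2\big(|1-\tau|+\tau\delta(1+\lceil\log_2 J\rceil)\big)\|\mathcal{X}^*-\mathcal{X}^k\|_F+2\tau\sqrt{1+\delta}\,\|\epsilon\|_2.
\end{eqnarray*}
The hypothesis $\delta<\frac{1/4-|1-\tau|}{\tau(1+\lceil\log_2 J\rceil)}$ makes the contraction factor at most $\frac12$, and it also gives $\delta<\frac{1/4-|1-\tau|}{\tau(1+\lceil\log_2 J\rceil)}$ inside the square root, so $2\tau\sqrt{1+\delta}\leq C$ with $C$ as stated.

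Finally I would unroll the recursion $a_{k+1}\leq\frac12 a_k+C\|\epsilon\|_2$ with $a_k:=\|\mathcal{X}^*-\mathcal{X}^k\|_F$: iterating gives $a_k\leq 2^{-k}a_0+C\|\epsilon\|_2\sum_{j=0}^{k-1}2^{-j}\leq 2^{-k}a_0+2C\|\epsilon\|_2$, which is exactly the claimed bound. The main obstacle — really the only nonroutine point — is getting the RIP-type estimate on $\|\mathcal{P}_{\mathbb{H}_i^k}\mathscr{B}_i\mathscr{A}^*\epsilon\|_F$ right: one must observe that although $\epsilon$ is an arbitrary vector in $\mathbb{R}^p$ (not a measurement of a low-rank tensor), the projection $\mathcal{P}_{\mathbb{H}_i^k}$ forces the relevant test tensor $\mathscr{B}_i^*\mathcal{P}_{\mathbb{H}_i^k}(W)$ to lie in the $2r_i$-rank set, so the upper restricted isometry bound still applies after passing to the dual formulation; everything else is a direct transcription of the noiseless proof plus a geometric series.
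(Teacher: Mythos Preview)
Your proposal is correct and follows essentially the same route as the paper: the paper likewise observes that the only change from the noiseless argument is the extra term $\tau\|\mathcal{P}_{\mathbb{H}_i^k}\mathscr{B}_i\mathscr{A}^*\epsilon\|_F$, bounds it by the same duality/Cauchy--Schwarz/RIP computation to obtain $\sqrt{1+\delta_{{\bf 2r}^i}}\,\|\epsilon\|_2$, and then iterates the resulting contraction inequality via the geometric series exactly as you do.
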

\begin{proof}
The proof is similar to the one of Theorem \ref{convergence1}. The main difference is to add one term involving the noise $\epsilon$. First, we can also obtain (\ref{thm-1}), i.e.,
\begin{eqnarray}\label{thm-21}
\|{\mathcal{X}}^*-{\mathcal{X}}^{k+1}\|_F\leq 2\sum_{i=1}^N w_i \|\mathcal{P}_{\mathbb{H}_i^k}(X^*_{i})-
\mathcal{P}_{\mathbb{H}_i^k}(\mathscr{B}_i({\mathcal{Y}}^k))\|_F.
\end{eqnarray}
In the noisy case (${\bm b}={\mathscr{A}}({\mathcal{X}}^*)+\epsilon$), we have the following result with similar deduction to (\ref{thm-2}), which only adds one term about $\epsilon$:
\begin{eqnarray}\label{thm-22}
&&  \|\mathcal{P}_{\mathbb{H}_i^k}(X^*_{i})-\mathcal{P}_{\mathbb{H}_i^k}(\mathscr{B}_i({\mathcal{Y}}^k))\|_F   \nonumber \\
&\leq& |1-\tau|\|\mathcal{P}_{\mathbb{H}_i^k}(X^*_{i}-X_i^k)\|_F
+\tau\|(I-\mathcal{P}_{\mathbb{H}_i^k}\mathscr{B}_i{\mathscr{A}}^*{\mathscr{A}}\mathscr{B}_i^*
    \mathcal{P}_{\mathbb{H}_i^k})(X^*_{i}-X_i^k)\|_F    \nonumber \\
&& +\tau\|\mathcal{P}_{\mathbb{H}_i^k}\mathscr{B}_i{\mathscr{A}}^*{\mathscr{A}}\mathscr{B}_i^*
   (I-\mathcal{P}_{\mathbb{H}_i^k})(X^*_{i}-X_i^k)\|_F+\tau\|\mathcal{P}_{\mathbb{H}_i^k}\mathscr{B}_i{\mathscr{A}}^*\epsilon\|_F.
\end{eqnarray}
The number of the right-hand terms increases 1, but the estimation of the remaining three terms are the same with (a), (b), (c) in the proof
of Theorem \ref{convergence1}. Thus, we just need to estimate the additional term $\tau\|\mathcal{P}_{\mathbb{H}_i^k}\mathscr{B}_i{\mathscr{A}}^*\epsilon\|_F$.
\begin{eqnarray}\label{thm-23}
\|\mathcal{P}_{\mathbb{H}_i^k}\mathscr{B}_i{\mathscr{A}}^*\epsilon\|_F&=&\max_{\|W\|_F=1}\langle W,\mathcal{P}_{\mathbb{H}_i^k}\mathscr{B}_i{\mathscr{A}}^*\epsilon\rangle
=\max_{\|W\|_F=1}\langle {\mathscr{A}}\mathscr{B}_i^*\mathcal{P}_{\mathbb{H}_i^k}(W),\epsilon\rangle \nonumber \\
&\leq& \max_{\|W\|_F=1}\|{\mathscr{A}}\mathscr{B}_i^*\mathcal{P}_{\mathbb{H}_i^k}(W)\|_F\|\epsilon\|_2 \nonumber \\
&\leq& \max_{\|W\|_F=1}\sqrt{1+\delta_{{\bf 2r}^i}}\|\mathscr{B}_i^*\mathcal{P}_{\mathbb{H}_i^k}(W)\|_F\|\epsilon\|_2 \nonumber \\
&\leq& \max_{\|W\|_F=1}\sqrt{1+\delta_{{\bf 2r}^i}}\|W\|_F\|\epsilon\|_2
= \sqrt{1+\delta_{{\bf 2r}^i}}\|\epsilon\|_2,
\end{eqnarray}
where the first inequality follows from the Cauchy-Schwarz inequality, the second inequality follows Definition \ref{Df-t-ric} and the fact that $\mathscr{B}_i^*\mathcal{P}_{\mathbb{H}_i^k}(W)$ is
a ${\bf 2r}^i$ tensor, where ${\bf 2r}^i=(n_1,\ldots,n_{i-1},2r_i,n_{i+1},\ldots,n_N)$, and the third inequality follows that $\|\mathscr{B}_i^*\|=1$ and $\|\mathcal{P}_{\mathbb{H}_i^k}(W)\|_F\leq \|W\|_F$, where $\|\mathscr{B}_i^*\|$ denotes the operator norm of $\mathscr{B}_i^*$.

Then, by using (a), (b), (c) in the proof of Theorem \ref{convergence1} and (\ref{thm-23}), we have
\begin{eqnarray}\label{thm-24}
&&  \|\mathcal{P}_{\mathbb{H}_i^k}(X^*_{i})-\mathcal{P}_{\mathbb{H}_i^k}(\mathscr{B}_i({\mathcal{Y}}^k))\|_F   \nonumber \\
&\leq& (|1-\tau|+\tau\delta_{{\bf 2r}^i}+\tau\delta_{{\bf 3r}^i}\lceil\log_2 {J_i}\rceil)\|{\mathcal{X}}^*-\mathcal{X}^k\|_F+\tau\sqrt{1+\delta_{{\bf 2r}^i}}\|\epsilon\|_2.
\end{eqnarray}
Substituting (\ref{thm-24}) into (\ref{thm-21}) and setting $J=\max \limits_{1\leq i\leq n} J_i$, $\delta=\max \limits_{1\leq i\leq N}\delta_{{\bf 3r}^i}$, we can obtain
\begin{eqnarray}\label{thm-27}
\|{\mathcal{X}}^*-{\mathcal{X}}^{k+1}\|_F
\leq 2(\tau\delta+\tau\delta\lceil\log_2 {J}\rceil+|1-\tau|)\|{\mathcal{X}}^*-\mathcal{X}^k\|_F+2\tau\sqrt{1+\delta}\|\epsilon\|_2,
\end{eqnarray}
Then, by the assumption that $\delta<\frac{\frac{1}{4}-|1-\tau|}{\tau(1+\lceil\log_2 J\rceil)}$, we can obtain
\begin{eqnarray}\label{thm-28}
\|{\mathcal{X}}^*-{\mathcal{X}}^{k+1}\|_F\leq \frac{1}{2}\|{\mathcal{X}}^*-\mathcal{X}^k\|_F+C\|\epsilon\|_2,
\end{eqnarray}
where $C=2\tau\sqrt{1+\frac{\frac{1}{4}-|1-\tau|}{\tau(1+\lceil\log_2 J\rceil)}}$ is a constant which only depends on $\tau$, $r_i$ and $n_i$ ($i\in\{1,\ldots,N\}$).
Iterating this inequality, we have
\begin{eqnarray*}
\|{\mathcal{X}}^*-{\mathcal{X}}^k\|_F\leq 2^{-k}\|{\mathcal{X}}^*-\mathcal{X}^0\|_F+2C\|\epsilon\|_2.
\end{eqnarray*}

The proof is complete.
\end{proof}

\section{Implementation Details}

\noindent \textit{Problem settings}. The random low $n$-rank tensor completion problems without noise we
considered in our numerical experiments are generated as in \citet{gry11}, \citet{mqlj2011} and \citet{yhs12}. For creating
a tensor $\mathcal{M}\in\mathbb{R}^{n_1 \times \ldots \times n_N}$ with $n$-rank $(r_1,r_2,\cdots,r_N)$, we
first generate a core tensor $\mathcal{S}\in \mathbb{R}^{r_1\times\cdots\times r_N}$ with i.i.d. Gaussian
entries ($\sim\mathcal{N}(0,1)$). Then, we generate matrixes $U_{1},\cdots,U_{N}$, with $U_{i}\in\mathbb{R}^{n_i\times r_i}$
whose entries are i.i.d. from  $\mathcal{N}(0,1)$ and set
\begin{eqnarray*}
\mathcal{M}:=\mathcal{S}\times_1U_{1}\times_2\cdots\times_N U_{N}.
\end{eqnarray*}
With this construction, the $n$-rank of $\mathcal{M}$ equals $(r_1,r_2,\cdots,r_N)$ almost surely.

We also conduct numerical experiments on random low $n$-rank tensor completion problems with
noisy data. For the noisy random low $n$-rank tensor completion problems, the tensor
$\mathcal{M}\in\mathbb{R}^{n_1 \times \ldots \times n_N}$ is corrupted
by a noise tensor $\mathcal{E}\in\mathbb{R}^{n_1 \times \ldots \times n_N}$ with independent normally distributed entries.
Then, $\mathcal{M}$ is taken to be
\begin{eqnarray}\label{noise}
\mathcal{M}:=\bar{\mathcal{M}}+\sigma \mathcal{E}=\mathcal{S}\times_1U_{1}\times_2\cdots\times_N U_{N}+\sigma \mathcal{E},
\end{eqnarray}
where $\sigma$ is the noise level.

We use $sr$ to denote the sampling ratio, i.e., a percentage $sr$ of the entries to be known and choose the
support of the known entries uniformly at random among all supports of size $sr\left(\prod^N_{i=1}n_i\right)$.
The values and the locations of the known entries of $\mathcal{M}$ are used as input for the algorithms. \\

\noindent \textit{Predicting $n$-rank}. In practice, the $n$-rank of the optimal solution is usually unknown.
Thus, we need to estimate the $n$-rank appropriately during the iterations. Inspired by the work
\citep{gm10}, we propose a heuristic for determining $n$-rank $\bm{r}$. We start with
$\bm{r}:=(\lceil \frac{n_1}{2} \rceil, \lceil \frac{n_2}{2} \rceil, \cdots, \lceil \frac{n_N}{2}
\rceil)$. In the $k$-th iteration ($k\geq 2$), for each $i$, we first choose $r_i$ as the number of singular values of $\mathscr{B}_i(\mathcal{Y}^{k-1})$ which are greater than
$\xi \bar{\sigma}^{k-1}$, where $\bar{\sigma}^{k-1}$ is the largest singular value of
$\mathscr{B}_i(\mathcal{Y}^{k-1})$ and $\xi \in (0, 1)$ is a given tolerance. Since the given tolerance sometimes truncates too many singular values, we need to increase $r_i$ occasionally. Note that
from the iterative scheme (\ref{scheme1}), we have that ${\mathscr{A}}^*({\mathscr{A}}({\mathcal{X}}^*)-{\bm b})=0$
at the optimal point $\mathcal{X}^*$. Thus, we increase $r_i$ by 1 whenever the Frobenius norm of
${\mathscr{A}}^*({\mathscr{A}}({\mathcal{X}}^k)-{\bm b})$ increased. Our numerical experience indicates
the efficiency of this heuristic for determining $\bm{r}$.       \\

\noindent \textit{Singular value decomposition}. Computing singular value decomposition is the main
computational cost even if we use a state-of-the-art code PROPACK \citep{propack}, especially when
the rank is relatively large. Therefore, for random low $n$-rank tensor completion problems without noise,
we use the Monte Carlo algorithm LinearTimeSVD
developed by \citet{prm2006} to compute an approximate SVD, which was also used in
\citet{gm10}, \citet{sdl2009} and \citet{yhhh2013} to reduce the computational cost. This LinearTimeSVD algorithm returns
an approximation to
the largest $sv$ singular values and the corresponding left singular vectors of a matrix $A \in \mathbb{R}^{m \times n}$
in linear $\mathcal{O}(m+n)$ time. We outline it below.
\begin{table}[H]
\centering  \tabcolsep 14pt
\small{\begin{tabular}{l}
\hline
Linear Time Approximate SVD Algorithm \citep{gm10,prm2006,sdl2009}   \vspace{1mm} \\
\quad \textbf{Input:} $A \in \mathbb{R}^{m \times n}$, $c_s, sv \in \mathbb{Z}^{+}$ s.t. $1 \leq sv \leq c_s \leq n$,
                       $\{p_j\}^n_{j=1}$ s.t. $p_j \geq 0$, $\sum^n_{j=1}p_j=1$. \\
\quad \textbf{Output:} $H_k \in \mathbb{R}^{m \times sv}$ and $\sigma_t(C)$, $t = 1,\ldots,sv$. \\
\quad \quad For $t=1:c_s$  \\
\quad \quad \quad Pick $i_t \in \{1,\ldots,n\}$ with $Pr[i_t=\alpha]=p_{\alpha}$, $\alpha=1,\ldots,n$.  \\
\quad \quad \quad Set $C^{(t)}=A^{(i_t)}/\sqrt{c_s p_{i_t}}$.   \\
\quad \quad Compute $C^{\top}C$ and its SVD; say $C^{\top}C=\sum^{c_s}_{t=1}\sigma^2_t(C)y^t(y^t)^{\top}$.  \\
\quad \quad Compute $h^t=Cy^t/\sigma_t(C)$ for $t=1,\ldots,sv$. \\
\quad \quad Return $H_{sv}$, where $H_{sv}^{(t)}=h^t$, and $\sigma_t(C)$, $t=1,\ldots,sv$. \\
\hline
\end{tabular}}
\end{table}

Thus, the outputs $\sigma_t(C)$, $t=1,\ldots,sv$ are approximations to the largest $sv$ singular values and
$H_{sv}^{(t)}$, $t=1,\ldots,sv$ are approximations to the corresponding left singular vectors of the matrix $A$.
The parameter settings we used in LinearTimeSVD algorithm are similar to those in \citet{sdl2009}.
To balance the computational time and accuracy of SVD of $C^{\top}C$, we choose a suitable $c_s=\lceil \min(n_i, T_i)/2 \rceil$
with $T_i=\prod_{k=1,k\neq i}^N n_k$ for each mode-$i$. All $p_j$'s are set to $1/T_i$ for simplicity. For the predetermined
parameter $sv$, in the $k$-th iteration, we let $sv$ equal to the predetermined $r_i$ for each mode-$i$.

On the other hand, for random low $n$-rank tensor completion problems with noisy data, to guarantee the accuracy
of the solution, we will use the matlab command $[U,S,V] = \mathrm{svd}(X,'\mathrm{econ}')$ to compute full SVD in our algorithms
although it may cost more time than the LinearTimeSVD algorithm does.

\section{Numerical Experiments}
In this section, we apply Algorithm 3.1 to solve the low $n$-rank tensor completion problem (\ref{p-add1}). We use IHTr-LRTC to
denote the algorithm in which the $n$-rank is specified, and IHT-LRTC to denote the algorithm in which the
$n$-rank is determined by the heuristic described in Section 5. We test IHTr-LRTC and IHT-LRTC on both
simulated and real world data with the missing data, and compare them with the latest tensor completion
algorithms, including FP-LRTC \citep{yhs12}, TENSOR-HC \citep{mqlj2011}, ADM-TR(E) \citep{gry11} and
HoRPCA (Higher-order Robust Principal Component Analysis) \citep{gq2014}. The Tucker decomposition algorithm based on the idea
of alternating least squares from the \textit{N-way toolbox for Matlab} \citep{ab2000} is also included, for which we use the correct $n$-rank $(r_{1}, \cdots, r_{N})$ (``N-way-E") and the higher $n$-rank $(r_{1} +1, \cdots, r_{N} +1)$ (``N-way-IE"). All numerical experiments are run in Matlab 7.14 on a HP Z800 workstation with an Intel Xeon(R) 3.33GHz CPU and 48GB of RAM.

For random low $n$-rank tensor completion problems without noise, the relative error
\begin{eqnarray*}
\mathrm{rel.err}:=\frac{||\mathcal{X}_{\mathrm{sol}}-\mathcal{M}||_F}{||\mathcal{M}||_F}
\end{eqnarray*}
is used to estimate the closeness of $\mathcal{X}_{\mathrm{sol}}$ to $\mathcal{M}$, where $\mathcal{X}_{\mathrm{sol}}$
is the ``optimal" solution produced by the algorithms and $\mathcal{M}$ is the original tensor.

For random low $n$-rank tensor completion problems with noisy data, we follow \cite{mqlj2011} to measure
the performance by the normalized root mean square error (NRMSE) on the complementary set $\Omega^{c}$:
\begin{eqnarray*}
\mathrm{NRMSE}(\mathcal{X}^{\mathrm{sol}}, \bar{\mathcal{M}}):=\frac{||\mathcal{X}^{\mathrm{sol}}_{\Omega^{c}}-\bar{\mathcal{M}}_{\Omega^{c}}||_F}
{\left(\mathrm{max}(\bar{\mathcal{M}}_{\Omega^{c}})-\mathrm{min}(\bar{\mathcal{M}}_{\Omega^{c}})\right)\sqrt{|\Omega^{c}|}}
\end{eqnarray*}
where $\bar{\mathcal{M}}$ is as in (\ref{noise}) and $|\Omega^{c}|$ denotes the cardinality of $\Omega^{c}$.

The stopping criterion we used for IHTr-LRTC and IHT-LRTC in all our numerical experiments is as follows:
\begin{eqnarray*}
\frac{\| \mathcal{X}^{k+1}-\mathcal{X}^k \|_F}{\mathrm{max}\{1, \|\mathcal{X}^k\|_F\}} < \mathrm{Tol},
\end{eqnarray*}
where Tol is a moderately small number, since when $\mathcal{X}^k$ gets close to an optimal solution
$\mathcal{X}^{\mathrm{opt}}$, the distance between $\mathcal{X}^k$ and $\mathcal{X}^{k+1}$ should become very small.

In IHTr-LRTC and IHT-LRTC, we choose the initial iteration to be $\mathcal{X}^0=0$ and set $\mathrm{Tol}=10^{-8}$.
The weighted parameters $w_i$ are set to $\frac{1}{N}$ for simplicity. Additionally, the parameter $\xi$ in
predicting $n$-rank is set to $10^{-2}$ for noiseless cases and $0.3$ for noisy cases. In FP-LRTC, we set
$\mu_1=1$, $\tau=10$, $\theta_{\mu}=1-sr$, $\bar{\mu}=1\times10^{-8}$, $\varepsilon = 10^{-2}$. In TENSOR-HC,
we set the regularization parameters $\lambda_{i}$, $i \in \{1,2,\cdots,N\}$ to $1$ and $\tau$ to $10$. In ADM-TR(E),
the parameters are set to $c_{\beta}=5, c_{\lambda}=5, \beta =1, \lambda = N$. For HoRPCA, we follow \cite{gq2014}
to keep $\mu$ constant and set $\mu = 10\mathrm{std}(\mathrm{vec}(\mathscr{M}_{\Omega}))$. The regularization
parameter\footnote{This regularization parameter $\lambda$ is different from that in authors' paper. It is given by authors
in their Matlab code for tensor completion, which can be downloaded from https://sites.google.com/site/tonyqin/research.}
$\lambda = 10^8$. It is stopped when the maximum of the relative primal and dual residuals decreased to below $10^{-8}$.

In FIG.1, we first numerically compare the recovery results with different values of $\tau$ by testing IHTr-LRTC and IHT-LRTC
on random noiseless low $n$-rank tensor completion problems with the tensor of size $20 \times 20 \times 30 \times 30$
and $n$-rank $(4,4,4,4)$. The sampling ratio is set to 0.3 and 0.6, respectively. It's worth noting that
though the assumption $\frac{1}{2}<\tau<\frac{3}{2}$ is given for ensuring convergence by theoretical analysis,
we find that IHTr-LRTC and IHT-LRTC can be convergent with choosing $\tau$ in a more
broad interval, which can be seen in the figure ($\tau$ is chosen from $\tau=0.1$ to $\tau=1.5$).
Additionally, it is obvious that the larger $\tau$ becomes, the less time it costs to recover a tensor
with lower relative error. Therefore, considering these situations, we can choose a larger $\tau$ to guarantee the low
error and less iterations. Specifically, we will set $\tau = 1.4$ for the remaining tests in this paper.

\begin{figure}[H]
\centering
\subfigure[]{\includegraphics[height=5.4cm]{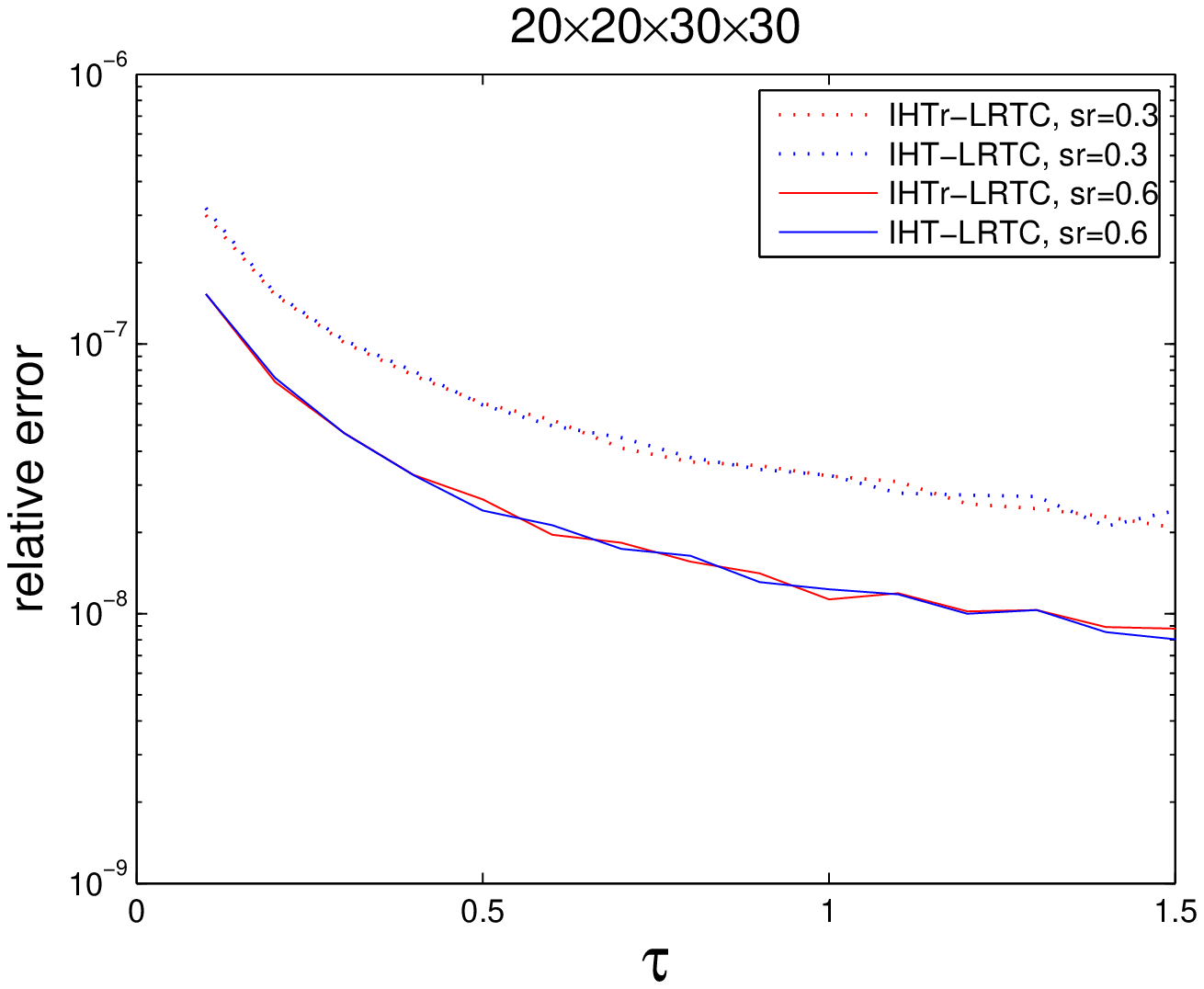}}
\subfigure[]{\includegraphics[height=5.4cm]{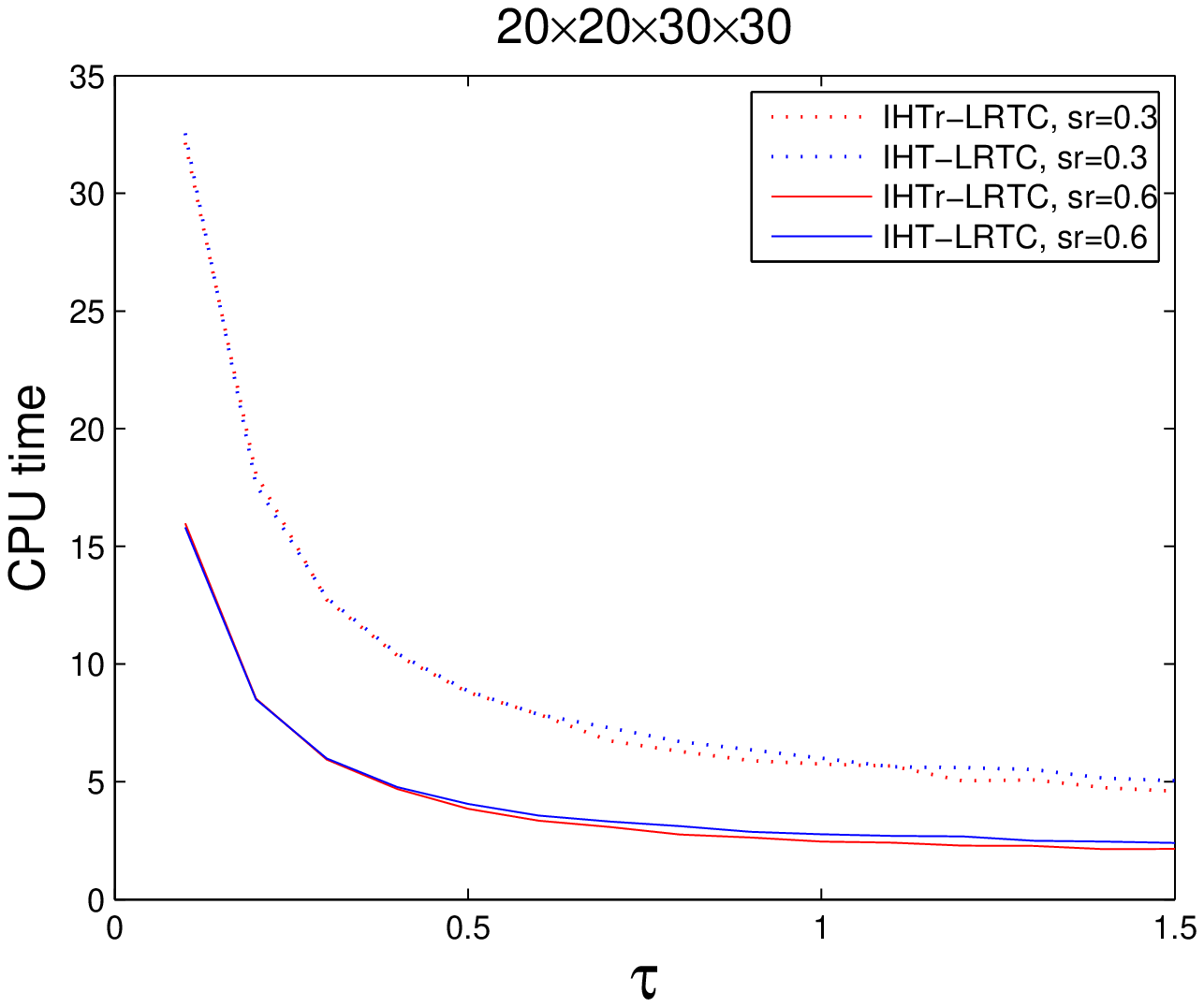}}
\vspace{-2mm}
\caption{Recovery results with different values of $\tau$ by testing IHTr-LRTC and IHT-LRTC
on random noiseless low $n$-rank tensor completion problems with the tensor of size $20 \times 20 \times 30 \times 30$
and $n$-rank $(4,4,4,4)$. (a) relative error; (b) CPU time in seconds.
All the results are average values of 10 independent trials.}
\end{figure}

Then, we compare IHTr-LRTC with IHT-LRTC on random noiseless low $n$-rank tensor completion problems
with the tensor of size $50 \times 50 \times 50$ and $n$-rank $(9, 9, 3)$.
The sampling ratio is set to 0.3 and 0.6, respectively. We plot the logarithm of the relative
error between the $\mathcal{X}^k$ and the true tensor $\mathcal{M}$ versus the iteration number for algorithms
IHTr-LRTC and IHT-LRTC in FIG.2 for each problem setting. From this figure, we can see that IHT-LRTC decreases
$\| \mathcal{X}^k-\mathcal{M}\|_F/\|\mathcal{M}\|_F$ slower than IHTr-LRTC due to the heuristic of determining
$r$. Additionally, for IHTr-LRTC, log$\|\mathcal{X}^k-\mathcal{M}\|_F/\|\mathcal{M}\|_F$ is approximately a linear
function of the iteration number $k$; for IHT-LRTC, it also approximately a linear function after several iterations.
This implies that the theoretical results in Theorem \ref{convergence1} approximately hold in practice.

\begin{figure}[H]
\centering
\includegraphics[height=6cm]{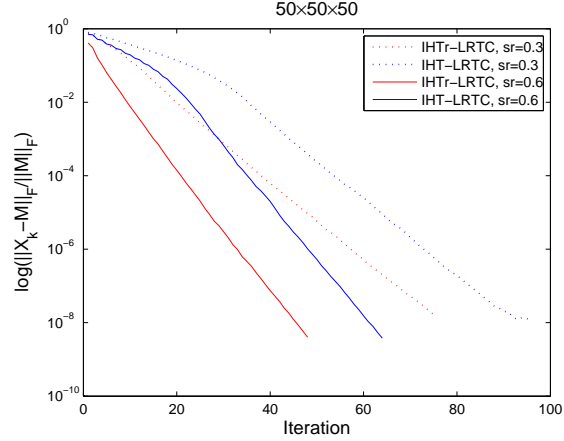}
\caption{Relative error versus the iteration number for algorithms IHTr-LRTC and IHT-LRTC on random noiseless
low $n$-rank tensor completion problems with the tensor of size $50 \times 50 \times 50$ and $n$-rank $(9, 9, 3)$.
The sampling ratio is set to 0.3 and 0.6, respectively.}
\end{figure}

\begin{table}[H]
\centering \tabcolsep 3pt
\begin{tabular}{ |l|llcl|l|llcl| }
\multicolumn{10}{c}{\small{\textbf{Table 1.} Comparisons of different algorithms on random noiseless low $n$-rank tensor completion
problems.}} \vspace{0.5mm}\\
\hline
\small{problem setting}  & \small{algorithm} & \small{iter} & \small{rel.err} & \footnotesize{time(s)} &
\small{problem setting}  & \small{algorithm} & \small{iter} & \small{rel.err} & \footnotesize{time(s)} \\
\hline
& \footnotesize{IHTr-LRTC} & \footnotesize{30} & \footnotesize{7.90e-9} & \footnotesize{0.12} &
& \footnotesize{IHTr-LRTC} & \footnotesize{82} & \footnotesize{2.33e-8} & \footnotesize{2.32} \vspace{-1mm} \\

& \footnotesize{IHT-LRTC} & \footnotesize{41} & \footnotesize{6.52e-9} & \footnotesize{0.17}  &
& \footnotesize{IHT-LRTC} & \footnotesize{93} & \footnotesize{2.38e-8} & \footnotesize{2.99}  \vspace{-1mm} \\

\small{$\mathbb{T}=\mathbb{R}^{20 \times 30 \times 40 }$}
& \footnotesize{FP-LRTC} & \footnotesize{105} & \footnotesize{1.92e-8} & \footnotesize{0.51} &
\small{$\mathbb{T}=\mathbb{R}^{60 \times 60 \times 60 }$}
& \footnotesize{FP-LRTC} & \footnotesize{520} & \footnotesize{2.00e-8} & \footnotesize{21.61}  \vspace{-1mm} \\

\small{${\bm r}=(2,2,2)$}
& \footnotesize{TENSOR-HC} & \footnotesize{66} & \footnotesize{2.09e-8} & \footnotesize{1.37} &
\small{${\bm r}=(9,9,6)$}
& \footnotesize{TENSOR-HC} & \footnotesize{49} & \footnotesize{7.23e-8} & \footnotesize{7.95} \vspace{-1mm} \\

\small{$sr=0.6$}
& \footnotesize{ADM-TR(E)} & \footnotesize{216} & \footnotesize{1.02e-8} & \footnotesize{10.40}  &
\small{$sr=0.3$}
& \footnotesize{ADM-TR(E)} & \footnotesize{410} & \footnotesize{2.37e-7} & \footnotesize{112.10} \vspace{-1mm} \\

& \footnotesize{HoRPCA} & \footnotesize{60} & \footnotesize{1.12e-8} & \footnotesize{1.53} &
& \footnotesize{HoRPCA} & \footnotesize{127} & \footnotesize{1.86e-8} & \footnotesize{18.09}   \vspace{-1mm} \\

& \footnotesize{N-way-E} & \footnotesize{31} & \footnotesize{1.40e-8} & \footnotesize{0.91} &
& \footnotesize{N-way-E} & \footnotesize{68} & \footnotesize{5.61e-8} & \footnotesize{8.25}    \vspace{-1mm} \\

& \footnotesize{N-way-IE} & \footnotesize{427} & \footnotesize{8.33e-2} & \footnotesize{14.38}   &
& \footnotesize{N-way-IE} & \footnotesize{772} & \footnotesize{2.54e-2} & \footnotesize{102.57}  \vspace{0mm} \\
\hline

& \footnotesize{IHTr-LRTC} & \footnotesize{31} & \footnotesize{7.62e-9} & \footnotesize{0.92} &
& \footnotesize{IHTr-LRTC} & \footnotesize{90} & \footnotesize{2.23e-8} & \footnotesize{4.90} \vspace{-1mm} \\

& \footnotesize{IHT-LRTC} & \footnotesize{38} & \footnotesize{7.20e-9} & \footnotesize{1.26}  &
& \footnotesize{IHT-LRTC} & \footnotesize{96} & \footnotesize{2.46e-8} & \footnotesize{5.28}  \vspace{-1mm} \\

\small{$\mathbb{T}=\mathbb{R}^{60 \times 60 \times 60 }$}
& \footnotesize{FP-LRTC} & \footnotesize{105} & \footnotesize{6.80e-9} & \footnotesize{4.49} &
\small{$\mathbb{T}=\mathbb{R}^{20 \times 20 \times 30 \times 30 }$}
& \footnotesize{FP-LRTC} & \footnotesize{520} & \footnotesize{3.67e-8} & \footnotesize{135.17}  \vspace{-1mm} \\

\small{${\bm r}=(9,9,6)$}
& \footnotesize{TENSOR-HC} & \footnotesize{35} & \footnotesize{3.33e-8} & \footnotesize{5.61} &
\small{${\bm r}=(4,4,4,4)$}
& \footnotesize{TENSOR-HC} & \footnotesize{50} & \footnotesize{3.42e-7} & \footnotesize{17.13} \vspace{-1mm} \\

\small{$sr=0.6$}
& \footnotesize{ADM-TR(E)} & \footnotesize{206} & \footnotesize{1.08e-8} & \footnotesize{60.62}  &
\small{$sr=0.3$}
& \footnotesize{ADM-TR(E)} & \footnotesize{456} & \footnotesize{2.56e-7} & \footnotesize{181.87} \vspace{-1mm} \\

& \footnotesize{HoRPCA} & \footnotesize{57} & \footnotesize{1.06e-8} & \footnotesize{8.22} &
& \footnotesize{HoRPCA} & \footnotesize{143} & \footnotesize{2.29e-8} & \footnotesize{34.11}   \vspace{-1mm} \\

& \footnotesize{N-way-E} & \footnotesize{26} & \footnotesize{9.97e-9} & \footnotesize{3.15} &
& \footnotesize{N-way-E} & \footnotesize{62} & \footnotesize{4.25e-8} & \footnotesize{27.75}    \vspace{-1mm} \\

& \footnotesize{N-way-IE} & \footnotesize{424} & \footnotesize{2.24e-2} & \footnotesize{55.42}   &
& \footnotesize{N-way-IE} & \footnotesize{804} & \footnotesize{3.63e-2} & \footnotesize{380.94}  \vspace{0mm} \\
\hline

& \footnotesize{IHTr-LRTC} & \footnotesize{37} & \footnotesize{9.62e-9} & \footnotesize{2.16} &
& \footnotesize{IHTr-LRTC} & \footnotesize{37} & \footnotesize{9.03e-9} & \footnotesize{21.55} \vspace{-1mm} \\

& \footnotesize{IHT-LRTC} & \footnotesize{45} & \footnotesize{8.10e-9} & \footnotesize{2.71}  &
& \footnotesize{IHT-LRTC} & \footnotesize{41} & \footnotesize{1.02e-8} & \footnotesize{24.40}  \vspace{-1mm} \\

\small{$\mathbb{T}=\mathbb{R}^{20 \times 20 \times 30 \times 30}$}
& \footnotesize{FP-LRTC} & \footnotesize{210} & \footnotesize{5.89e-9} & \footnotesize{16.69} &
\small{$\mathbb{T}=\mathbb{R}^{20 \times 20 \times 20 \times 20 \times 20 }$}
& \footnotesize{FP-LRTC} & \footnotesize{135} & \footnotesize{7.18e-9} & \footnotesize{103.83}  \vspace{-1mm} \\

\small{${\bm r}=(4,4,4,4)$}
& \footnotesize{TENSOR-HC} & \footnotesize{36} & \footnotesize{3.74e-8} & \footnotesize{12.32} &
\small{${\bm r}=(2,2,2,2,2)$}
& \footnotesize{TENSOR-HC} & \footnotesize{43} & \footnotesize{4.84e-8} & \footnotesize{198.59} \vspace{-1mm} \\

\small{$sr=0.6$}
& \footnotesize{ADM-TR(E)} & \footnotesize{219} & \footnotesize{1.88e-8} & \footnotesize{91.97}  &
\small{$sr=0.5$}
& \footnotesize{ADM-TR(E)} & \footnotesize{228} & \footnotesize{4.40e-8} & \footnotesize{728.66} \vspace{-1mm} \\

& \footnotesize{HoRPCA} & \footnotesize{65} & \footnotesize{1.38e-8} & \footnotesize{15.84} &
& \footnotesize{HoRPCA} & \footnotesize{64} & \footnotesize{1.30e-8} & \footnotesize{207.10}   \vspace{-1mm} \\

& \footnotesize{N-way-E} & \footnotesize{24} & \footnotesize{7.11e-9} & \footnotesize{11.09} &
& \footnotesize{N-way-E} & \footnotesize{29} & \footnotesize{5.30e-9} & \footnotesize{98.81}    \vspace{-1mm} \\

& \footnotesize{N-way-IE} & \footnotesize{442} & \footnotesize{2.12e-2} & \footnotesize{207.52}   &
& \footnotesize{N-way-IE} & \footnotesize{514} & \footnotesize{1.56e-2} & \footnotesize{1834.62}  \vspace{0mm} \\
\hline

\end{tabular}
\end{table}

Table 1 presents the different settings for random noiseless low $n$-rank tensor completion
problems and the recovery performance of different algorithms.
The order of the tensors varies from three to five, and we also vary the $n$-rank and the sampling ratio
$sr$. For each problem setting, we solve 10 randomly created tensor completion problems. iter, rel.err and
time(s) stands for the average iterations, the average relative error and the average time (seconds) for
each problem setting, respectively. From the results in Table 1, we can easily see that it costs
less time with lower $n$-rank and higher sampling ratio $sr$. By comparing the results of different algorithms,
it is easy to see that IHTr-LRTC and IHT-LRTC always perform better than other algorithms in both relative error
and CPU time. Note that though IHT-LRTC converges a little slower than IHTr-LRTC since it needs more iterations
and time to determine $n$-rank, the recoverability of IHT-LRTC can be comparable with that of IHTr-LRTC, which
indicates the efficiency of the heuristic for determining $n$-rank. For the problem with relative
large size (e.g., $\mathbb{T}=\mathbb{R}^{20 \times 20 \times 20 \times 20 \times 20}$, $\bm{r}=(2,2,2,2,2)$,
$sr=0.5$), we can see that IHTr-LRTC and IHT-LRTC can save much more time to recover a tensor.
Additionally, it's worth noting that N-way-E also has a good performance for all the problem settings, but N-way-IE
performs poorly for these problems though we just use a little higher $n$-rank. This situation indicates that
the \textit{N-way toolbox} depends strongly on the knowledge of the $n$-rank and the tensor may no longer
be recovered with the inexact $n$-rank.

Then, we test the first seven different algorithms (N-way-IE is poorer than other algorithms obviously
by Table 1) on random noiseless low $n$-rank tensor completion problems with the tensor of fixed size $100 \times 100 \times 100$
and different $n$-ranks $(r,r,r)$ (here we set $r_1=r_2=r_3=r$ for convenience). FIG.3 depict the average results of
10 independent trials corresponding to different $n$-rank $(r,r,r)$ for randomly created noiseless tensor completion problems.
The sampling ratios is set to $0.5$. As indicated in FIG.3, IHTr-LRTC and IHT-LRTC are
always faster and more robust than others, and provide the solutions with lower relative error.

\begin{figure}[H]
\centering
\subfigure[]{\includegraphics[height=5.4cm]{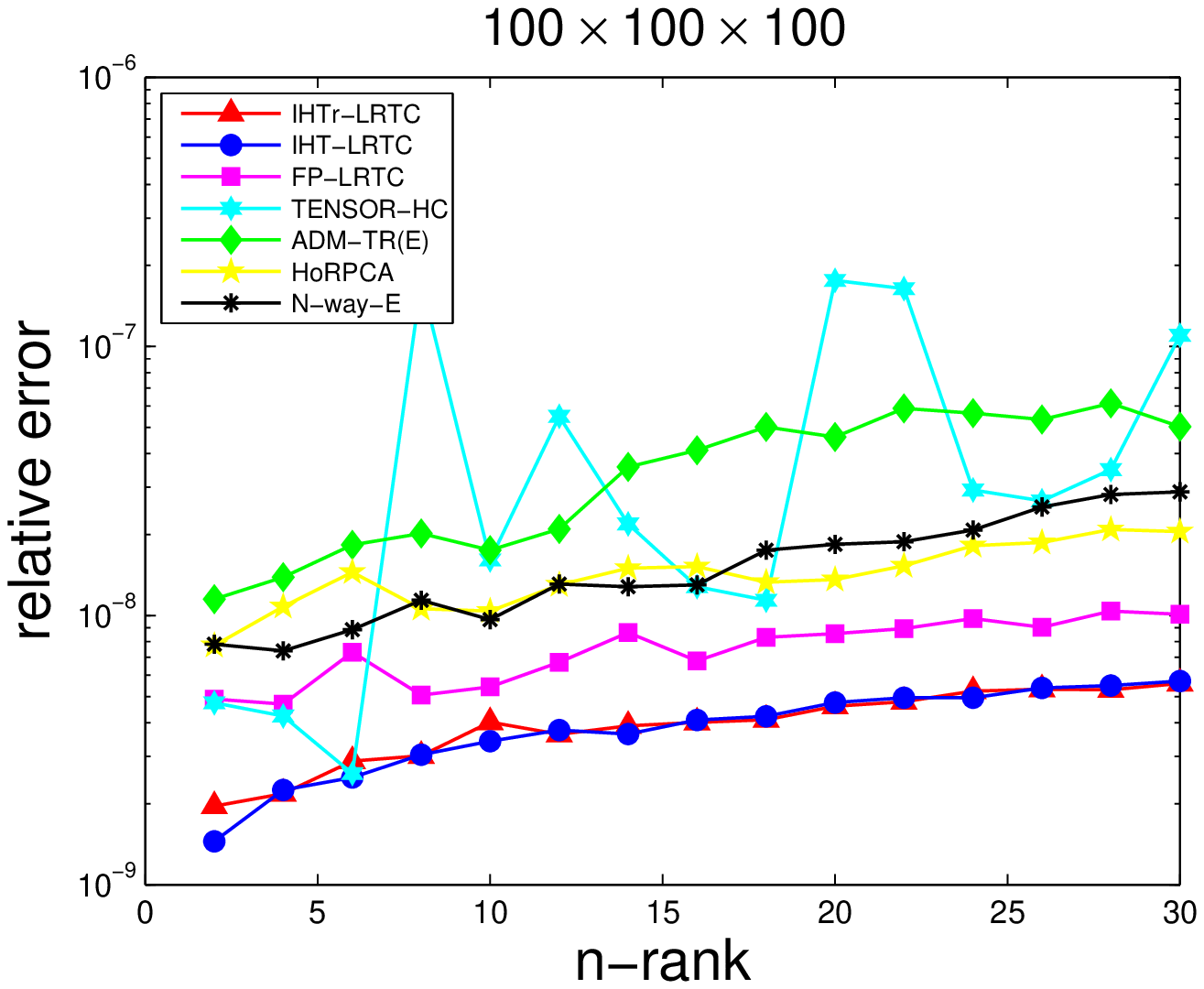}}
\subfigure[]{\includegraphics[height=5.4cm]{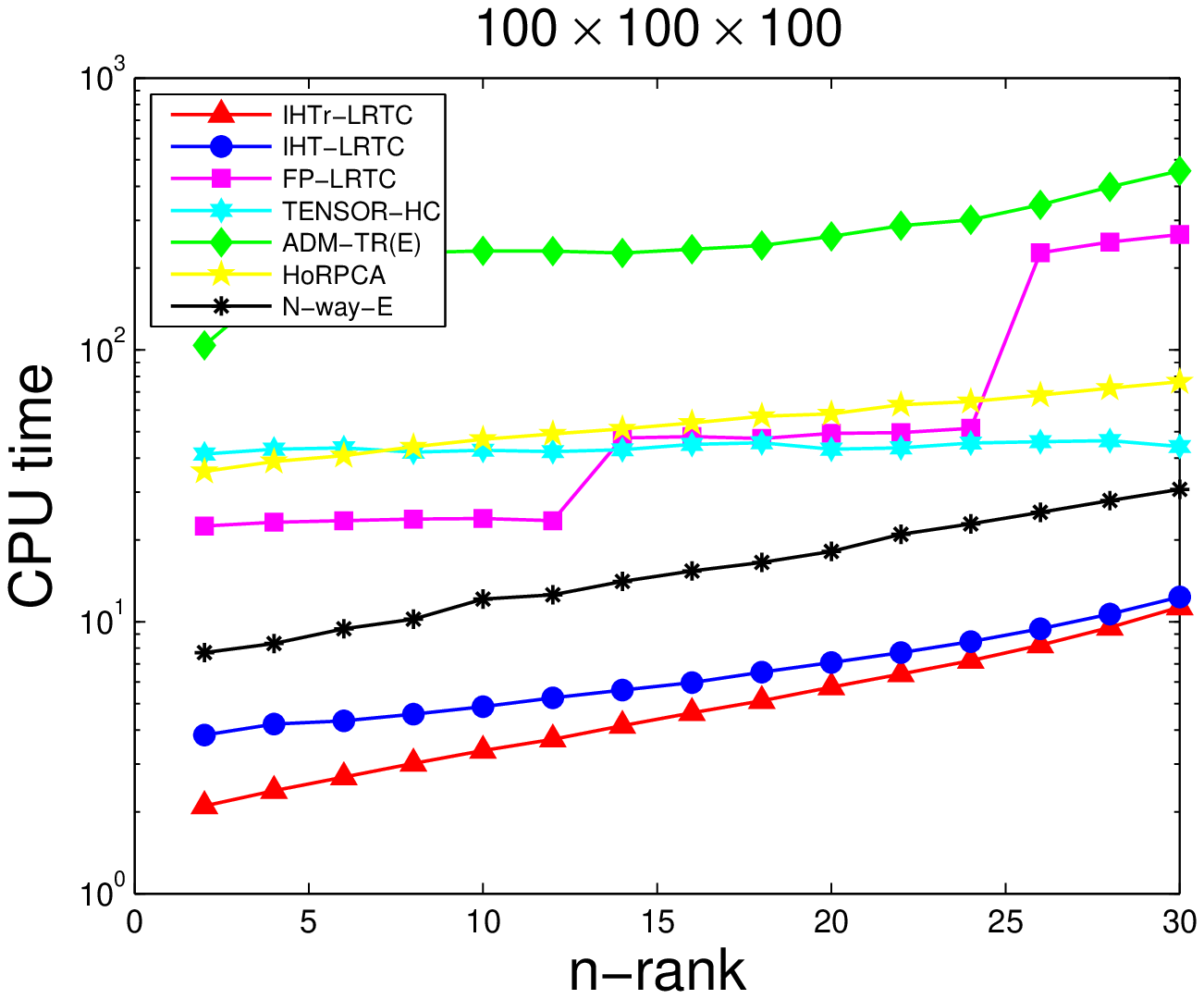}}
\caption{Recovery results by IHTr-LRTC, IHT-LRTC, FP-LRTC, TENSOR-HC, ADM-TR(E), HoRPCA and N-way-E on
random noiseless low $n$-rank tensor completion problems with the tensor of fixed size $100 \times 100 \times 100$
and different $n$-ranks. (a) relative error; (b) CPU time in seconds.
All the results are average values of 10 independent trials.}
\end{figure}

We further test the algorithms on random noisy low $n$-rank tensor completion problems. Table \textbf{2} presents
the numerical performance. In the table, we report the mean of NRMSEs, iterations and execution times over 10 independent
trials. Then, we set the noise level $\sigma=0.02$. From the results, we can easily see that IHTr-LRTC and IHT-LRTC are
comparable with other algorithms in terms of NRMSE and CPU time.

\begin{table}[H]
\centering \tabcolsep 3pt
\begin{tabular}{ |l|llcl|l|llcl| }
\multicolumn{10}{c}{\small{\textbf{Table 2.} Comparisons of different algorithms on random noisy low
$n$-rank tensor completion problems.}} \vspace{0.5mm}\\
\hline
\small{problem setting}  & \small{algorithm} & \small{iter} & \scriptsize{NRMSE} & \footnotesize{time(s)} &
\small{problem setting}  & \small{algorithm} & \small{iter} & \scriptsize{NRMSE} & \footnotesize{time(s)} \\
\hline
& \footnotesize{IHTr-LRTC} & \footnotesize{31} & \footnotesize{2.16e-3} & \footnotesize{0.44} &
& \footnotesize{IHTr-LRTC} & \footnotesize{74} & \footnotesize{2.77e-3} & \footnotesize{7.70} \vspace{-1mm} \\

& \footnotesize{IHT-LRTC} & \footnotesize{38} & \footnotesize{3.71e-3} & \footnotesize{0.55}  &
& \footnotesize{IHT-LRTC} & \footnotesize{102} & \footnotesize{3.06e-3} & \footnotesize{10.64}  \vspace{-1mm} \\

\small{$\mathbb{T}=\mathbb{R}^{20 \times 30 \times 40 }$}
& \footnotesize{FP-LRTC} & \footnotesize{105} & \footnotesize{5.33e-3} & \footnotesize{1.06} &
\small{$\mathbb{T}=\mathbb{R}^{60 \times 60 \times 60 }$}
& \footnotesize{FP-LRTC} & \footnotesize{520} & \footnotesize{1.19e-2} & \footnotesize{23.59}  \vspace{-1mm} \\

\small{${\bm r}=(2,2,2)$}
& \footnotesize{TENSOR-HC} & \footnotesize{45} & \footnotesize{9.22e-3} & \footnotesize{0.93} &
\small{${\bm r}=(9,9,6)$}
& \footnotesize{TENSOR-HC} & \footnotesize{31} & \footnotesize{8.88e-3} & \footnotesize{5.02} \vspace{-1mm} \\

\small{$sr=0.6$}
& \footnotesize{ADM-TR(E)} & \footnotesize{142} & \footnotesize{5.24e-3} & \footnotesize{9.30}  &
\small{$sr=0.3$}
& \footnotesize{ADM-TR(E)} & \footnotesize{301} & \footnotesize{1.20e-2} & \footnotesize{105.56} \vspace{-1mm} \\

\small{$\sigma=0.02$} & \footnotesize{HoRPCA} & \footnotesize{38} & \footnotesize{5.63e-3} & \footnotesize{0.97} &
\small{$\sigma=0.02$} & \footnotesize{HoRPCA} & \footnotesize{82} & \footnotesize{1.19e-2} & \footnotesize{12.05}   \vspace{-1mm} \\

& \footnotesize{N-way-E} & \footnotesize{32} & \footnotesize{1.24e-3} & \footnotesize{0.89} &
& \footnotesize{N-way-E} & \footnotesize{69} & \footnotesize{1.70e-3} & \footnotesize{8.23}    \vspace{-1mm} \\

& \footnotesize{N-way-IE} & \footnotesize{682} & \footnotesize{3.81e-3} & \footnotesize{19.29}   &
& \footnotesize{N-way-IE} & \footnotesize{748} & \footnotesize{2.03e-3} & \footnotesize{88.88}  \vspace{0mm} \\
\hline

& \footnotesize{IHTr-LRTC} & \footnotesize{30} & \footnotesize{2.89e-3} & \footnotesize{3.11} &
& \footnotesize{IHTr-LRTC} & \footnotesize{78} & \footnotesize{2.04e-3} & \footnotesize{13.50} \vspace{-1mm} \\

& \footnotesize{IHT-LRTC} & \footnotesize{39} & \footnotesize{3.22e-3} & \footnotesize{4.14}  &
& \footnotesize{IHT-LRTC} & \footnotesize{100} & \footnotesize{2.00e-3} & \footnotesize{17.35}  \vspace{-1mm} \\

\small{$\mathbb{T}=\mathbb{R}^{60 \times 60 \times 60 }$}
& \footnotesize{FP-LRTC} & \footnotesize{105} & \footnotesize{7.26e-3} & \footnotesize{5.00} &
\small{$\mathbb{T}=\mathbb{R}^{20 \times 20 \times 30 \times 30 }$}
& \footnotesize{FP-LRTC} & \footnotesize{520} & \footnotesize{1.45e-2} & \footnotesize{40.37}  \vspace{-1mm} \\

\small{${\bm r}=(9,9,6)$}
& \footnotesize{TENSOR-HC} & \footnotesize{23} & \footnotesize{9.64e-3} & \footnotesize{3.79} &
\small{${\bm r}=(4,4,4,4)$}
& \footnotesize{TENSOR-HC} & \footnotesize{26} & \footnotesize{9.74e-3} & \footnotesize{8.84} \vspace{-1mm} \\

\small{$sr=0.6$}
& \footnotesize{ADM-TR(E)} & \footnotesize{125} & \footnotesize{6.82e-3} & \footnotesize{46.05}  &
\small{$sr=0.3$}
& \footnotesize{ADM-TR(E)} & \footnotesize{530} & \footnotesize{1.47e-2} & \footnotesize{186.29} \vspace{-1mm} \\

\small{$\sigma=0.02$} & \footnotesize{HoRPCA} & \footnotesize{32} & \footnotesize{6.81e-2} & \footnotesize{4.69} &
\small{$\sigma=0.02$} & \footnotesize{HoRPCA} & \footnotesize{355} & \footnotesize{1.46e-2} & \footnotesize{87.08}   \vspace{-1mm} \\

& \footnotesize{N-way-E} & \footnotesize{26} & \footnotesize{1.34e-3} & \footnotesize{3.11} &
& \footnotesize{N-way-E} & \footnotesize{60} & \footnotesize{8.06e-4} & \footnotesize{26.04}    \vspace{-1mm} \\

& \footnotesize{N-way-IE} & \footnotesize{444} & \footnotesize{1.49e-3} & \footnotesize{51.43}   &
& \footnotesize{N-way-IE} & \footnotesize{925} & \footnotesize{1.21e-3} & \footnotesize{417.96}  \vspace{0mm} \\
\hline

& \footnotesize{IHTr-LRTC} & \footnotesize{35} & \footnotesize{2.68e-3} & \footnotesize{6.29} &
& \footnotesize{IHTr-LRTC} & \footnotesize{34} & \footnotesize{1.52e-3} & \footnotesize{83.79} \vspace{-1mm} \\

& \footnotesize{IHT-LRTC} & \footnotesize{42} & \footnotesize{2.26e-3} & \footnotesize{7.54}  &
& \footnotesize{IHT-LRTC} & \footnotesize{45} & \footnotesize{1.21e-3} & \footnotesize{111.25}  \vspace{-1mm} \\

\small{$\mathbb{T}=\mathbb{R}^{20 \times 20 \times 30 \times 30}$}
& \footnotesize{FP-LRTC} & \footnotesize{210} & \footnotesize{8.31e-3} & \footnotesize{17.46} &
\small{$\mathbb{T}=\mathbb{R}^{20 \times 20 \times 20 \times 20 \times 20 }$}
& \footnotesize{FP-LRTC} & \footnotesize{135} & \footnotesize{6.06e-3} & \footnotesize{117.55}  \vspace{-1mm} \\

\small{${\bm r}=(4,4,4,4)$}
& \footnotesize{TENSOR-HC} & \footnotesize{21} & \footnotesize{9.81e-3} & \footnotesize{7.04} &
\small{${\bm r}=(2,2,2,2,2)$}
& \footnotesize{TENSOR-HC} & \footnotesize{15} & \footnotesize{8.50e-3} & \footnotesize{73.24} \vspace{-1mm} \\

\small{$sr=0.6$}
& \footnotesize{ADM-TR(E)} & \footnotesize{204} & \footnotesize{8.01e-3} & \footnotesize{75.99}  &
\small{$sr=0.5$}
& \footnotesize{ADM-TR(E)} & \footnotesize{422} & \footnotesize{5.76e-3} & \footnotesize{1278.79} \vspace{-1mm} \\

\small{$\sigma=0.02$} & \footnotesize{HoRPCA} & \footnotesize{128} & \footnotesize{7.98e-3} & \footnotesize{32.32} &
\small{$\sigma=0.02$} & \footnotesize{HoRPCA} & \footnotesize{662} & \footnotesize{5.99e-3} & \footnotesize{2214.99}   \vspace{-1mm} \\

& \footnotesize{N-way-E} & \footnotesize{24} & \footnotesize{5.80e-4} & \footnotesize{11.17} &
& \footnotesize{N-way-E} & \footnotesize{28} & \footnotesize{1.20e-4} & \footnotesize{95.27}    \vspace{-1mm} \\

& \footnotesize{N-way-IE} & \footnotesize{450} & \footnotesize{9.20e-4} & \footnotesize{206.41}   &
& \footnotesize{N-way-IE} & \footnotesize{441} & \footnotesize{2.32e-4} & \footnotesize{1520.44}  \vspace{0mm} \\
\hline

\end{tabular}
\end{table}

\textit{Inpainting of color Images via low $n$-rank tensor completion}. Next, we further evaluate the performance
of IHTr-LRTC and IHT-LRTC on image inpainting \citep{bmgv2000}. Color images can be expressed as third-order
tensors. If the image is of low $n$-rank, or numerical low $n$-rank, we can solve the image inpainting
problem as a low $n$-rank tensor recovery problem. In our test, we first compute the best rank-$(r_1, r_2, r_3)$
approximation of a color image to obtain an numerical low $n$-rank image. Then, we randomly remove the values
of some of the pixels of the numerical low $n$-rank image, and want to fill in these missing values.
\vspace{2mm}

\textbf{Remark}: The best rank-$(r_1, r_2, \cdots, r_N)$ approximation is used as a tool for dimensionality
reduction and signal subspace estimation. Several algorithms for this purpose have been proposed in the literature,
e.g., the higher-order orthogonal iteration (HOOI) \citep{lbj2000}. More details can be seen in \citet{m2009}.
Note that the \textit{N-way toolbox for Matlab} is also an effective and convenient tool of computing
the best rank-$(r_1, r_2, \cdots, r_N)$ approximation. However, considering to be fair and reasonable,
we here use the \textit{MATLAB Tensor Toolbox} by \citet{bk2012}, which is an another famous tool for tensor computation,
to compute the best rank-$(r_1, r_2, \cdots, r_N)$ approximation. Using Matlab notation, for a tensor $\mathcal{X} \in
\mathbb{R}^{n_1 \times n_2 \times \cdots \times n_N}$, $\bar{\mathcal{X}}=\mathrm{tucker}\_\mathrm{als}
(\mathcal{X}, [r_1~ r_2 \cdots r_N])$ returns the best rank-$(r_1, r_2, \cdots, r_N)$ approximation of $\mathcal{X}$.
Additionally, the parameter $\xi$ in predicting $n$-rank is set to $10^{-4}$ to guarantee the better prediction
of $n$-rank for the practical problems.
\vspace{2mm}

FIG.4 and FIG.5 respectively present the recovered images for the best rank-$(30,30,3)$ and rank-$(100,100,3)$ approximation
of the original $512\times512$ image by different algorithms (Here, ADM-TR(E) and N-way-IE perform poorer than others obviously,
so their results are no longer reported). The sampling ratio is set to 0.3. We also report the numerical
results in Table 3. Although the recovered images of these five algorithms are similar visually to each other, the results in
Table 3 show that IHTr-LRTC and IHT-LRTC are more effective than others, especially for the problem with high $n$-rank.
More specifically, for the best rank-$(30,30,3)$ approximation of the original image, all the algorithms can recover the
image well by using only 30\% of pixels and IHTr-LRTC is much faster than others. For the best rank-$(100,100,3)$ approximation
of the original image, we can see that the relative errors of recovered images by FP-LRTC, TENSOR-HC and HoRPCA are very large
due to the relatively high $n$-rank. However, IHTr-LRTC and IHT-LRTC can also perform well.

\begin{figure}[H]
\centering
\subfigure[]{\includegraphics[height=4cm]{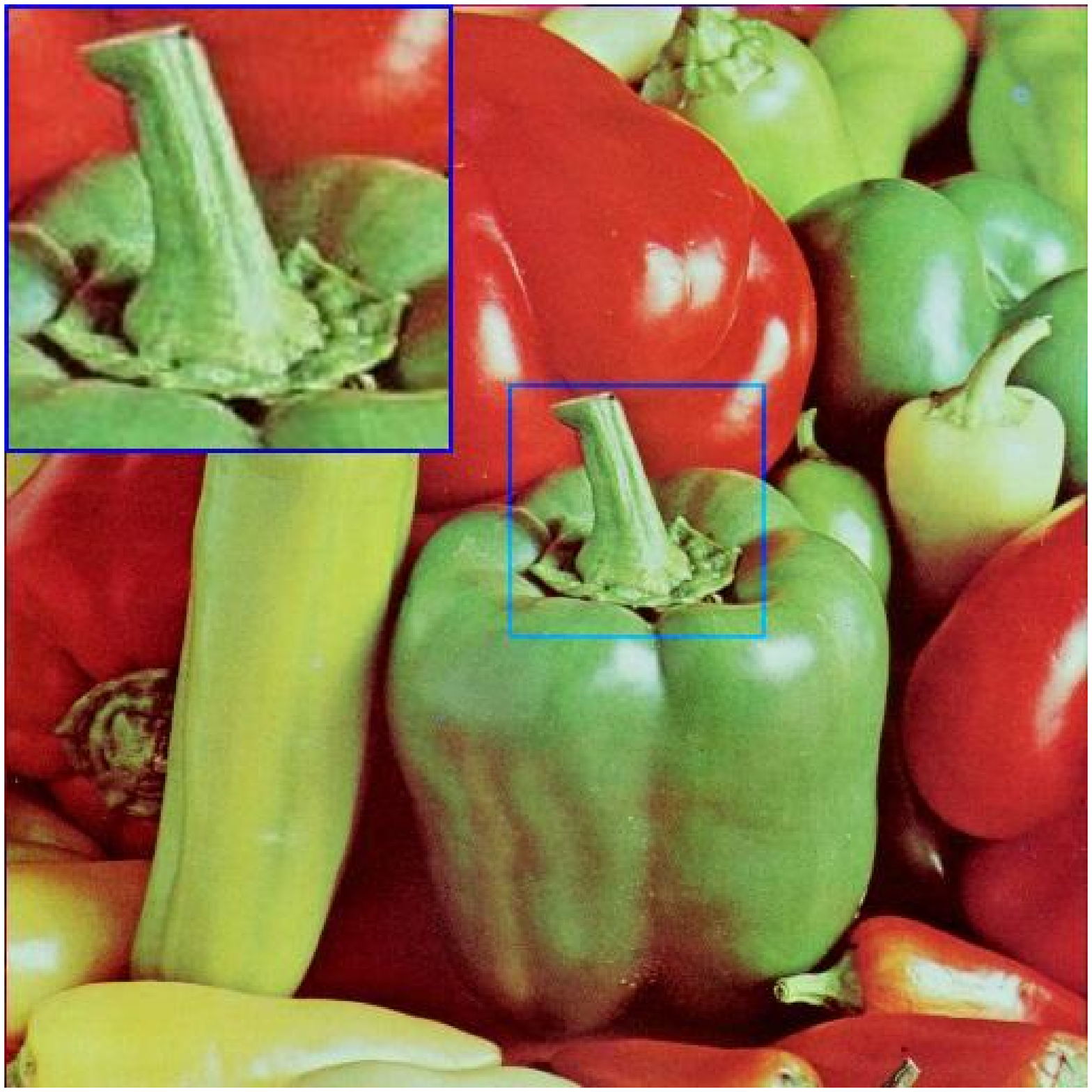}} \qquad
\subfigure[]{\includegraphics[height=4cm]{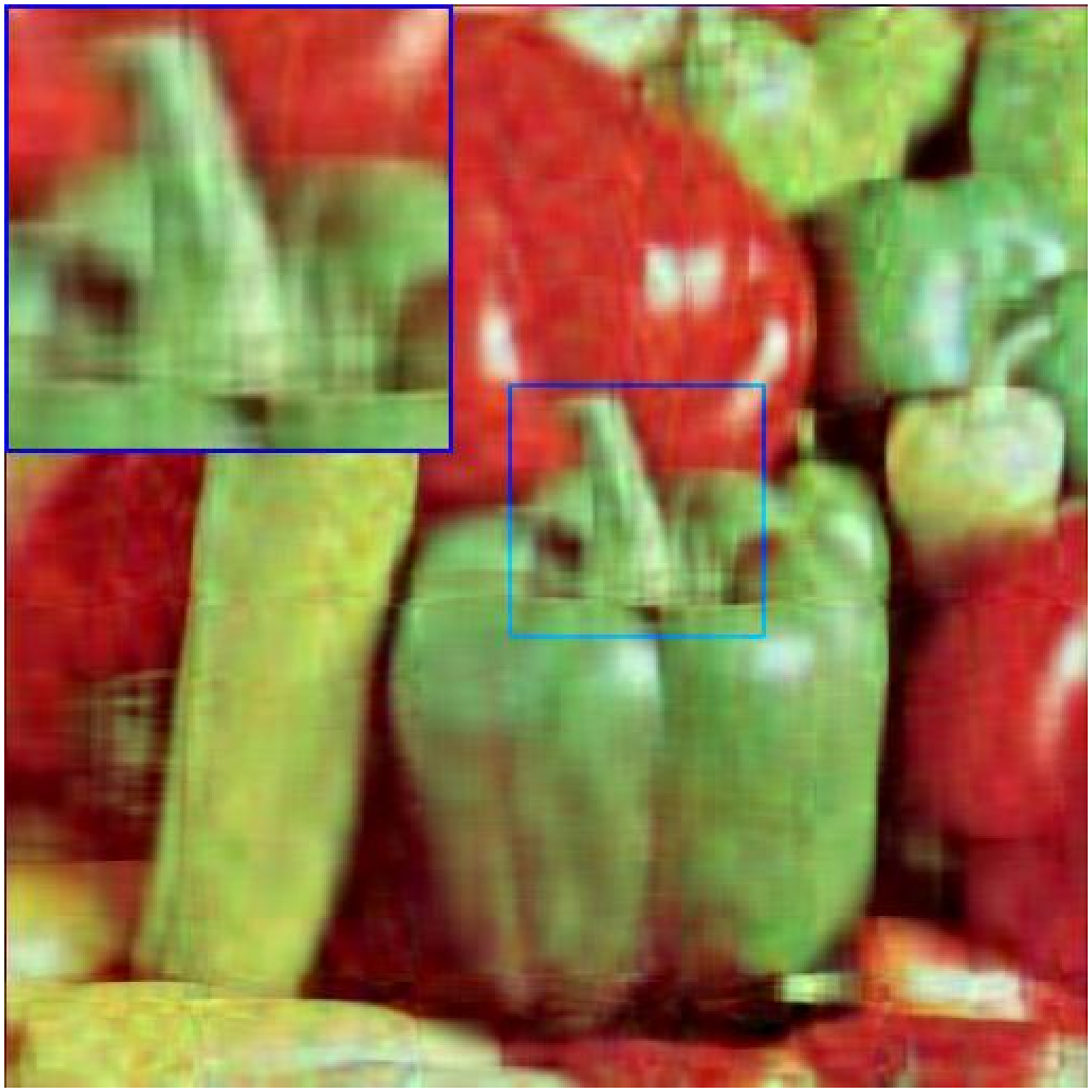}}~
\subfigure[]{\includegraphics[height=4cm]{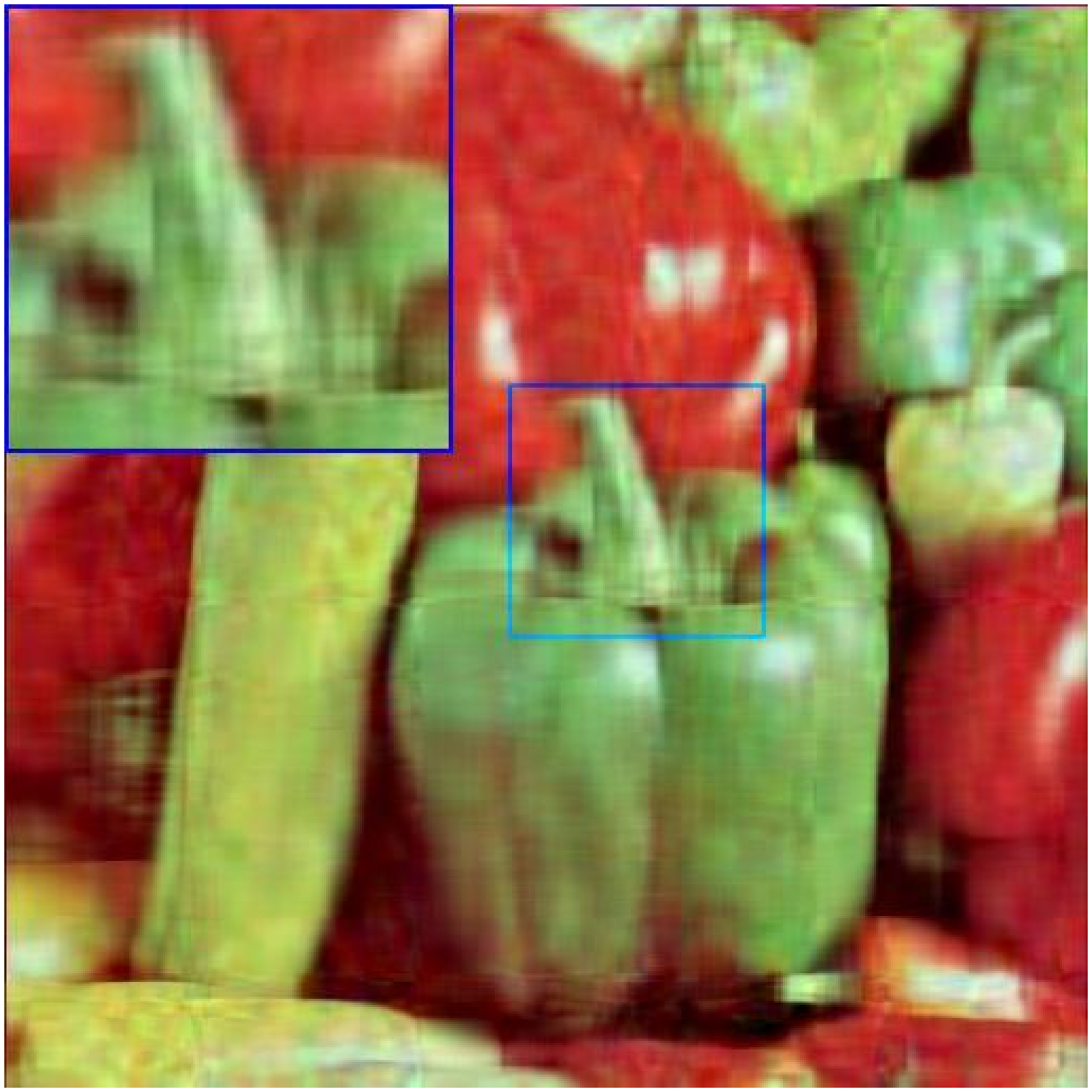}} \\

\subfigure[]{\includegraphics[height=4cm]{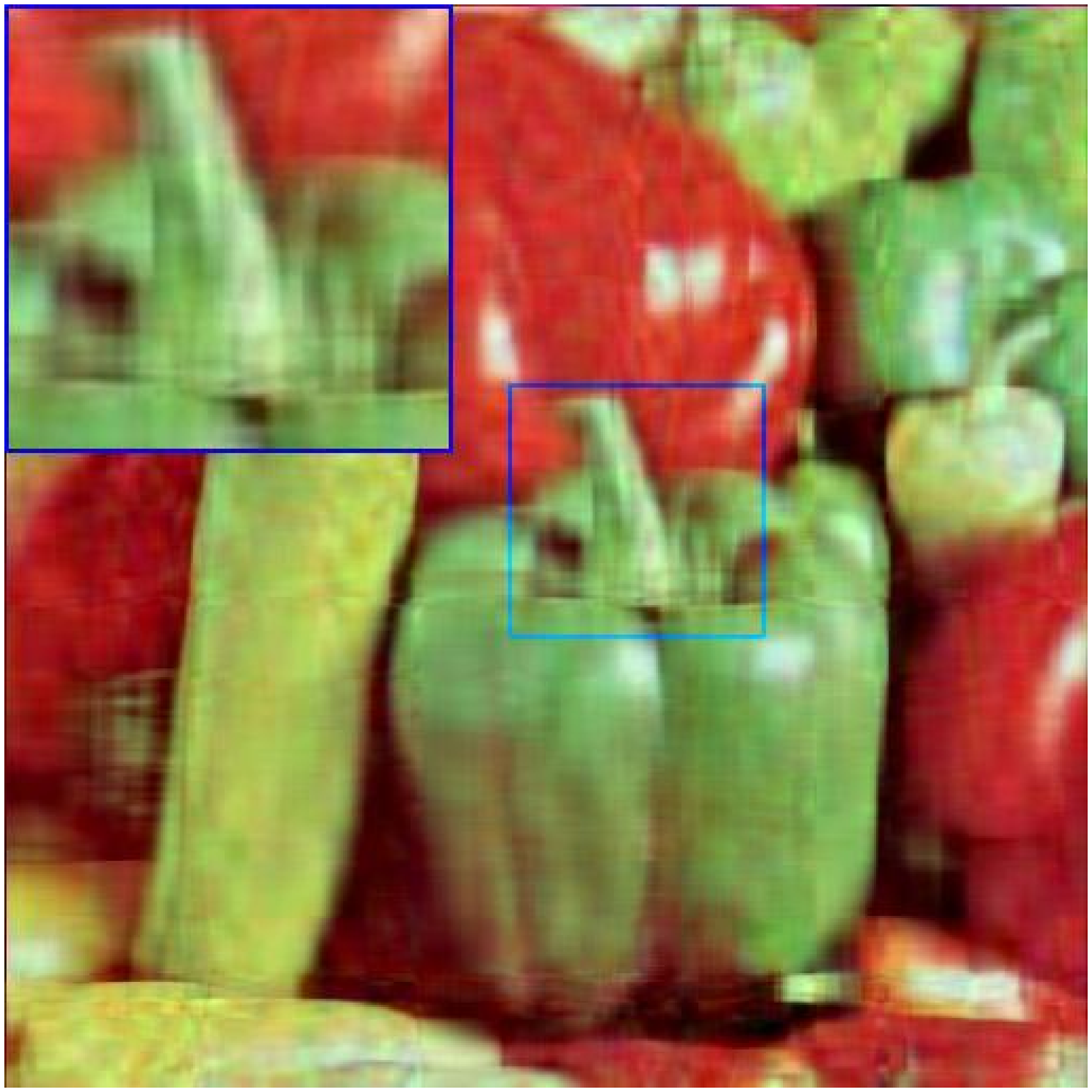}} \qquad
\subfigure[]{\includegraphics[height=4cm]{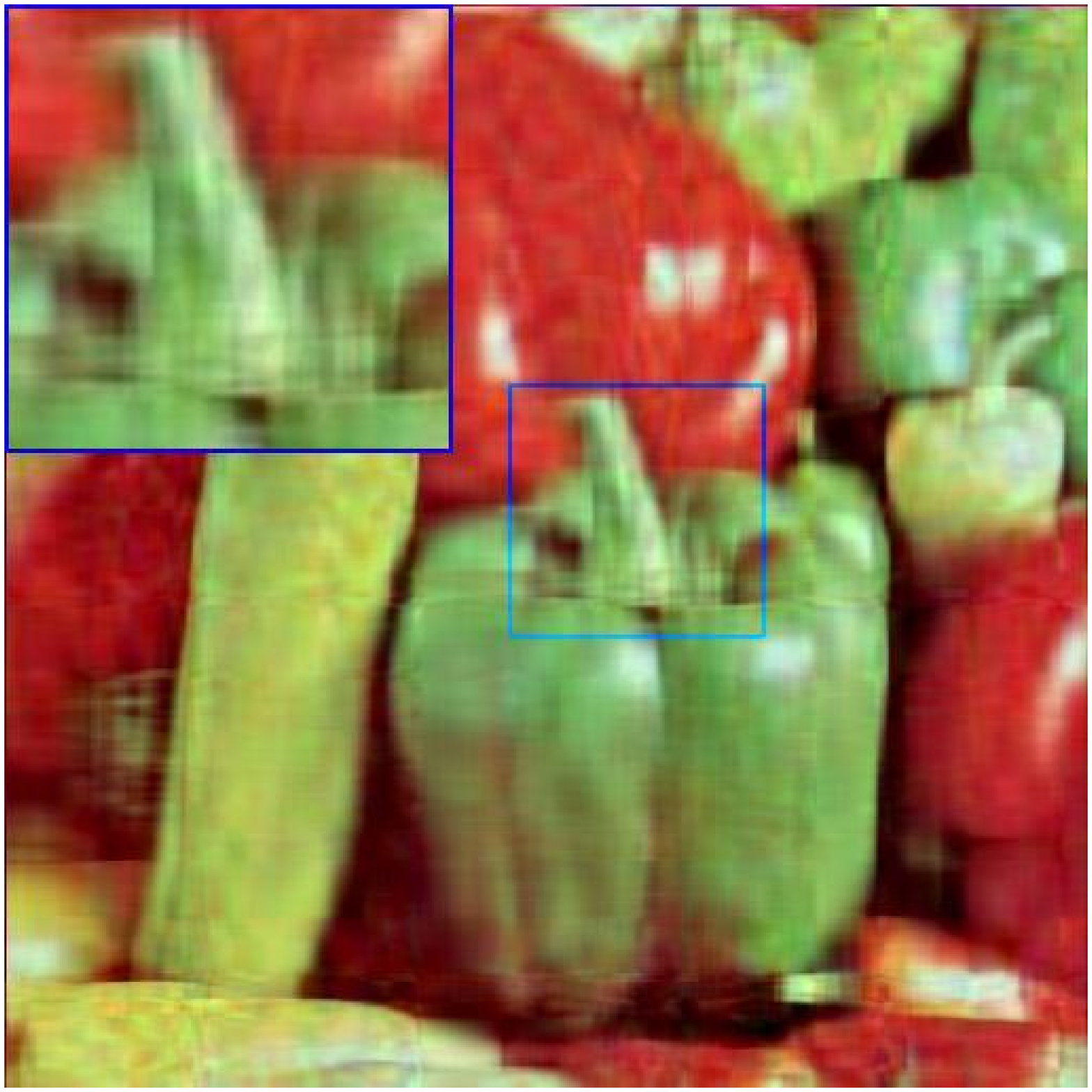}}~
\subfigure[]{\includegraphics[height=4cm]{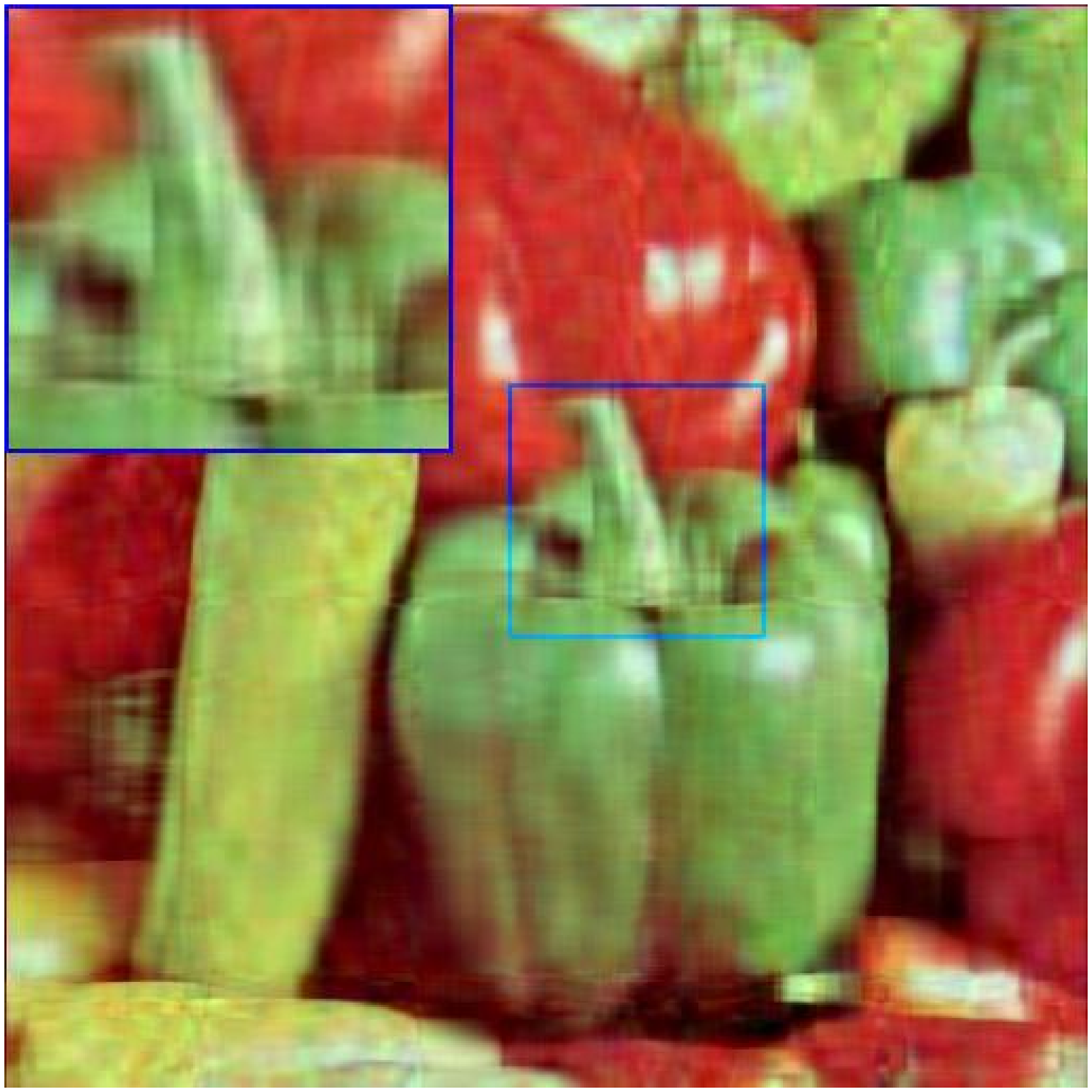}} \\

\subfigure[]{\includegraphics[height=4cm]{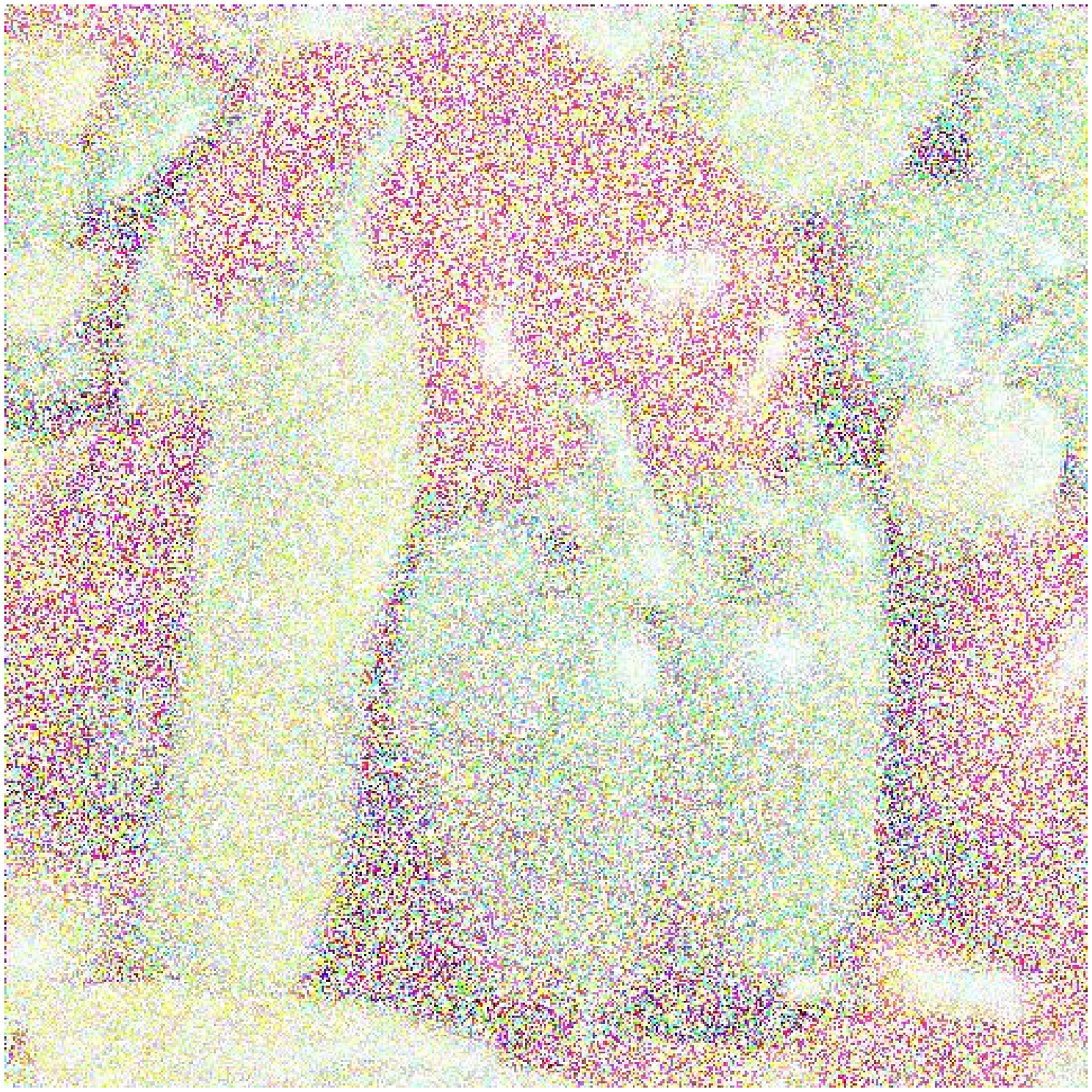}} \qquad
\subfigure[]{\includegraphics[height=4cm]{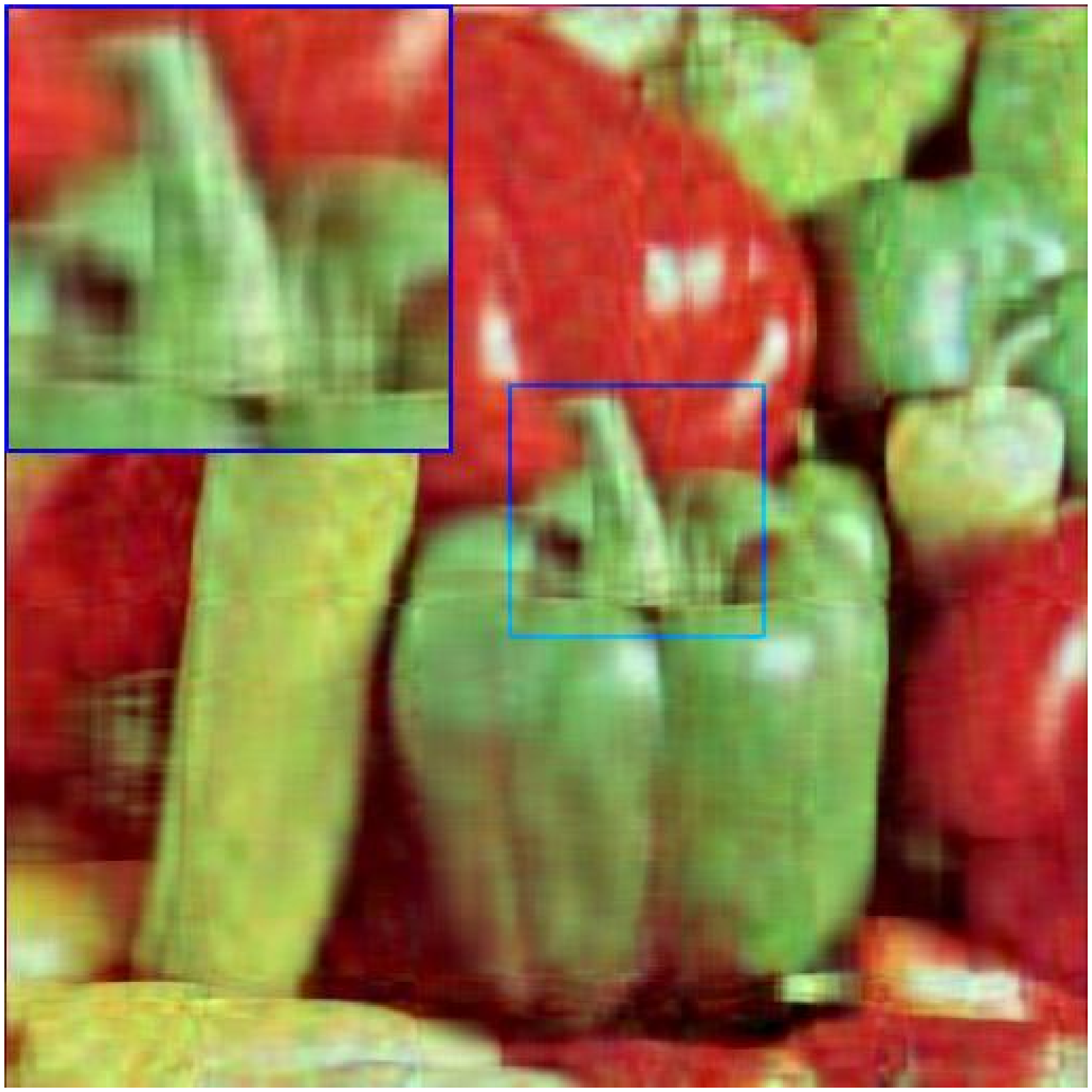}}~
\subfigure[]{\includegraphics[height=4cm]{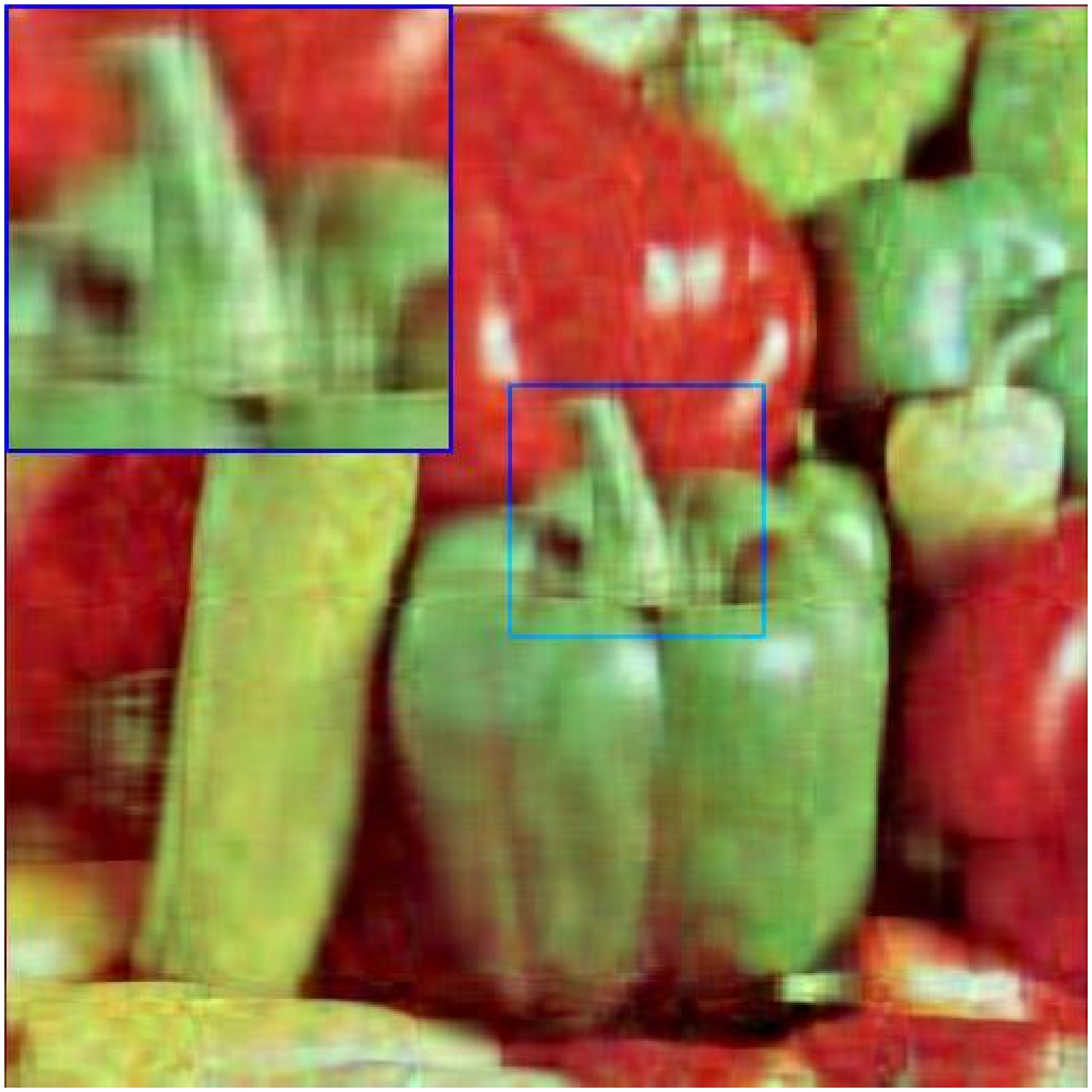}}
\caption{Comparisons of different algorithms for image inpaiting. The right two columns are recovery results of different
algorithms. Specifically, (a) Original $512\times512$ image; (d) The best rank-$(30, 30, 3)$ approximation of
original image; (g) Input to the algorithm (30\% known entries); (b) Recovered image by IHTr-LRTC; (e) Recovered image by IHT-LRTC;
(h) Recovered image by FP-LRTC; (c) Recovered image by TENSOR-HC; (f) Recovered image by HoRPCA;
(i) Recovered image by N-way-E.}
\end{figure}

\begin{figure}[H]
\centering
\subfigure[]{\includegraphics[height=4cm]{org_zoom.eps}} \qquad
\subfigure[]{\includegraphics[height=4cm]{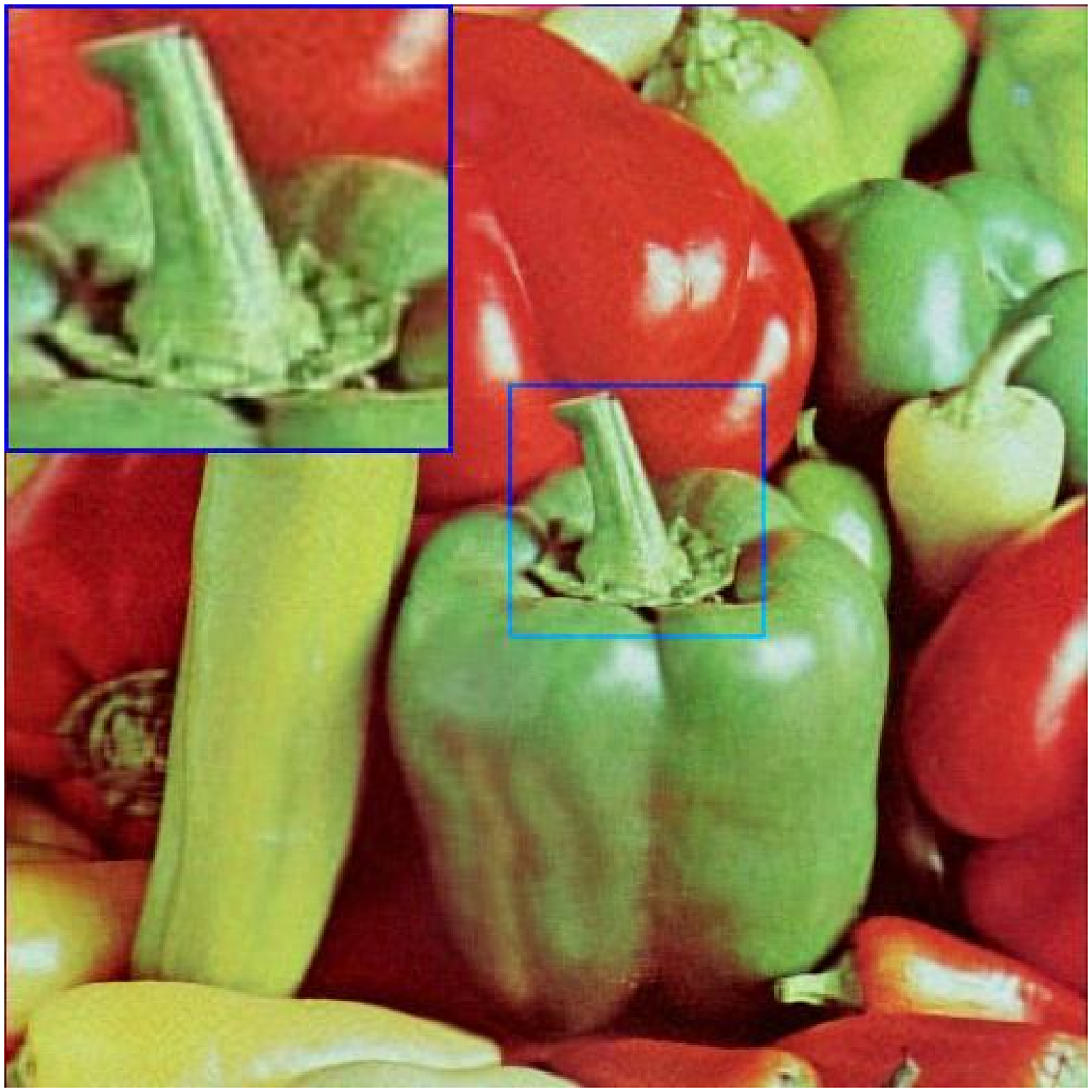}}~
\subfigure[]{\includegraphics[height=4cm]{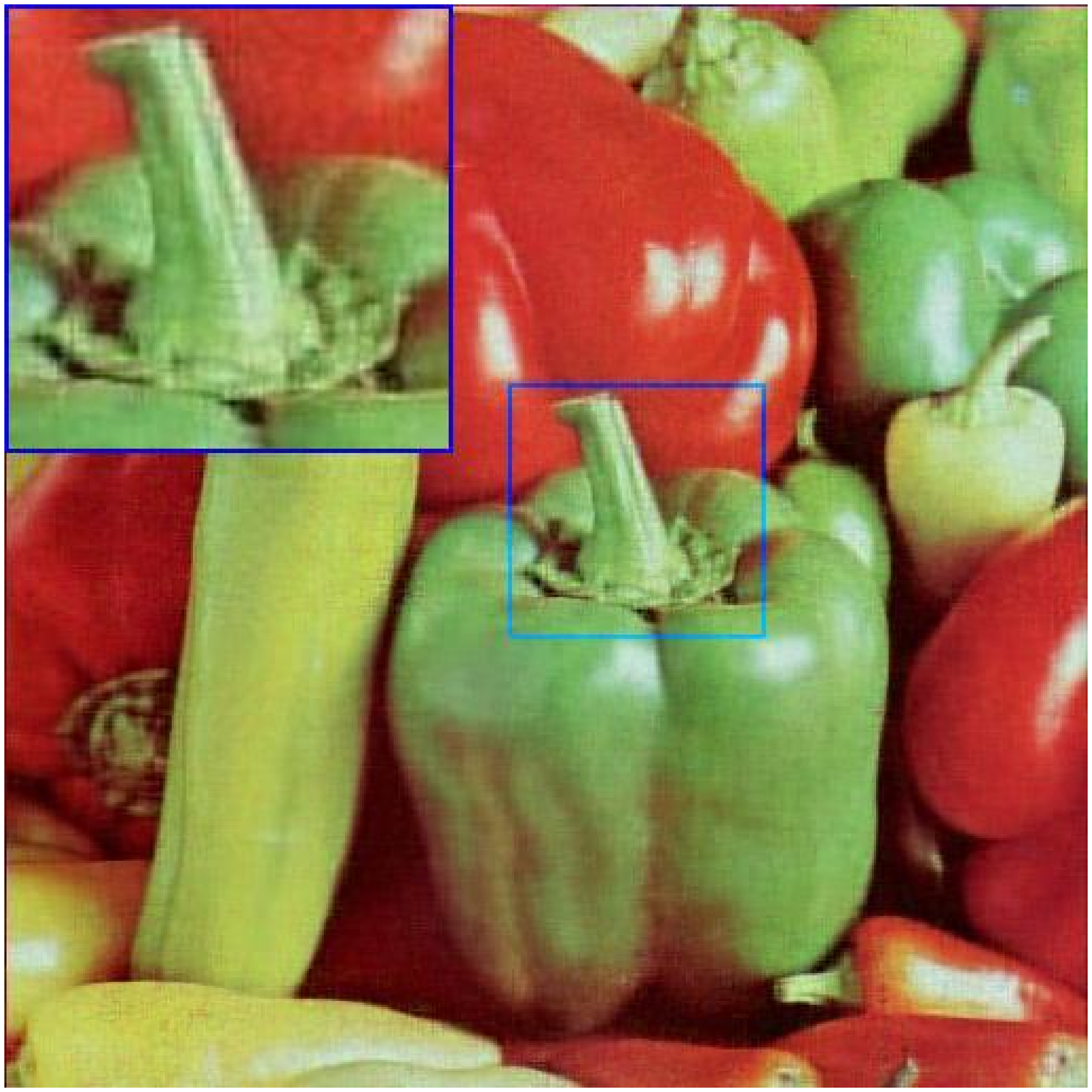}} \\

\subfigure[]{\includegraphics[height=4cm]{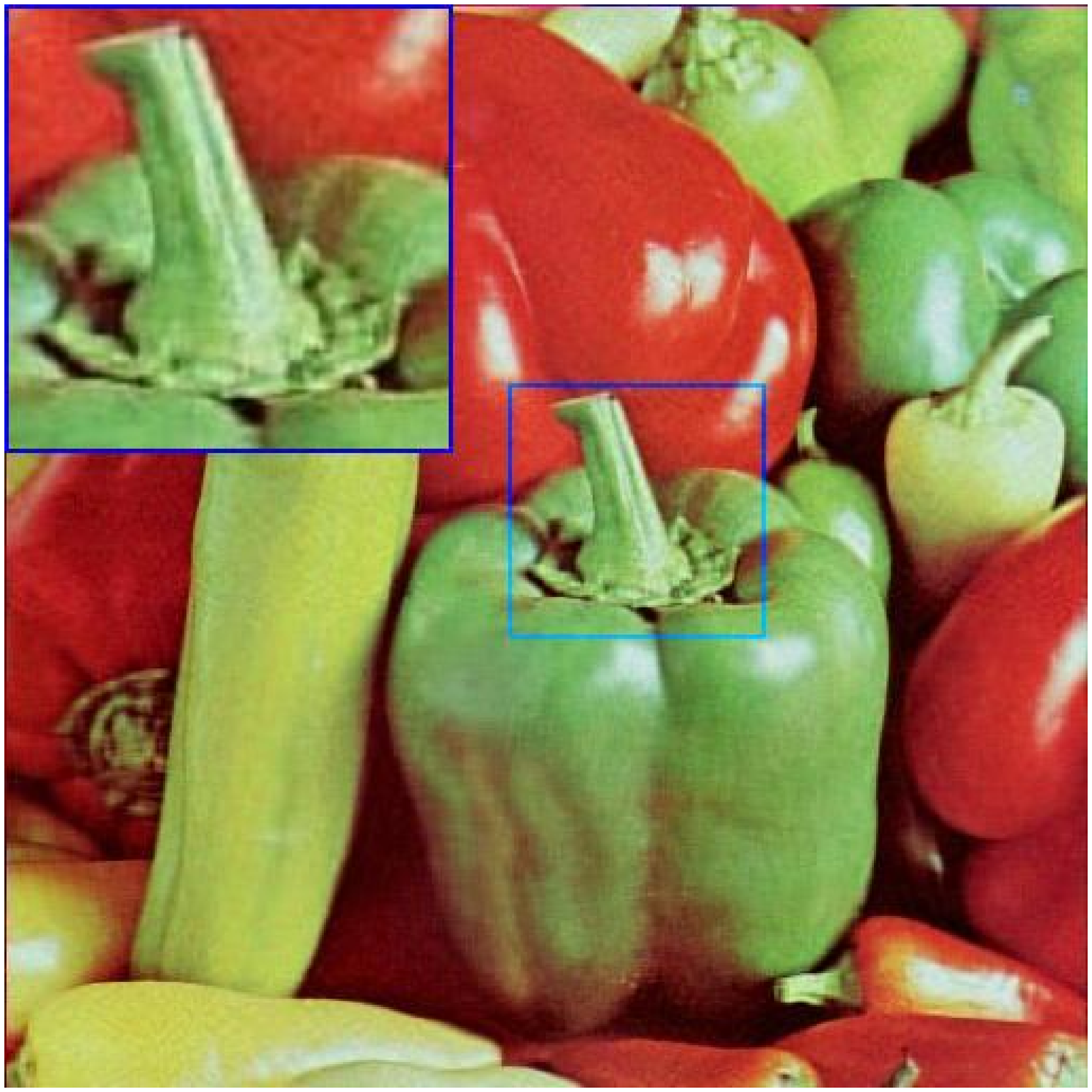}} \qquad
\subfigure[]{\includegraphics[height=4cm]{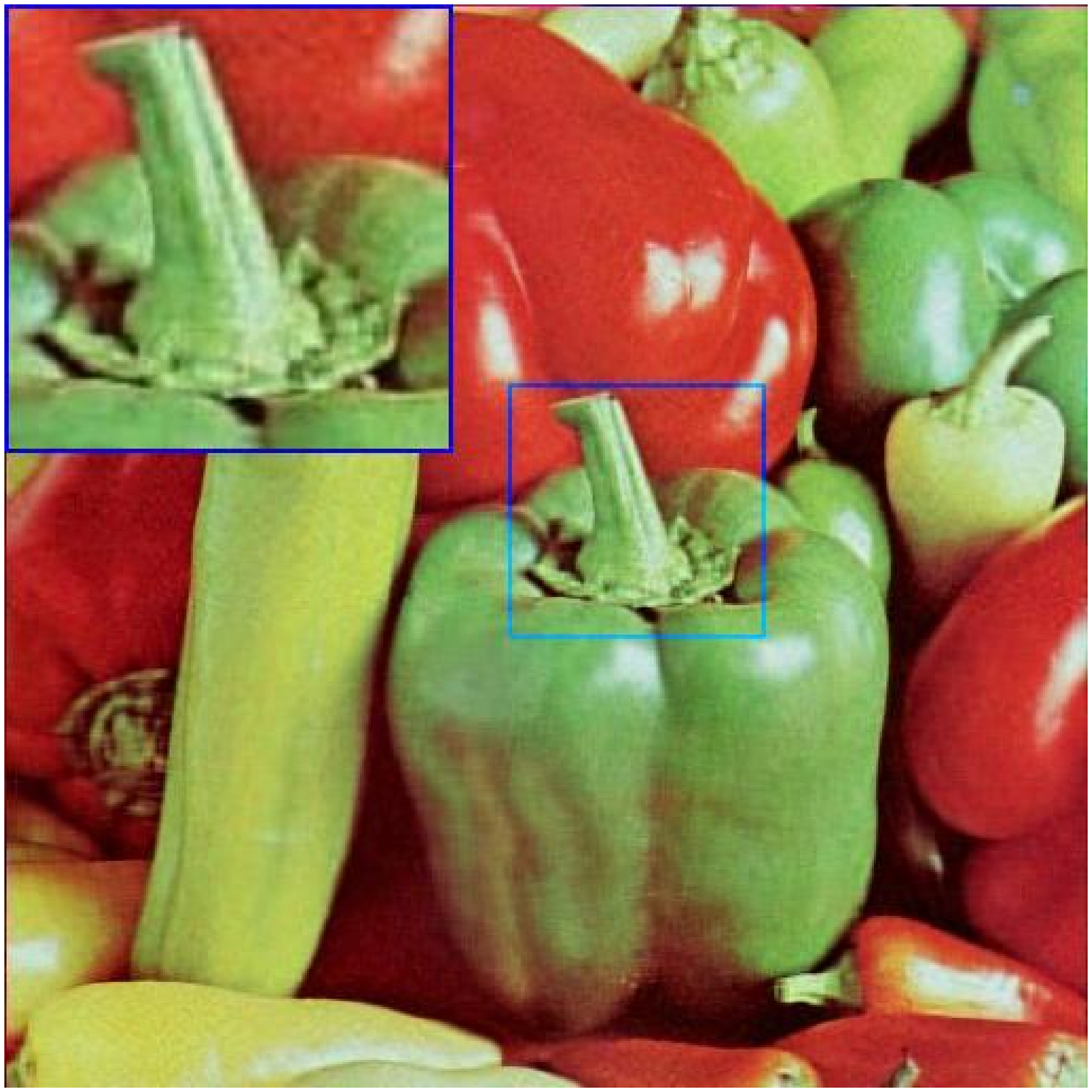}}~
\subfigure[]{\includegraphics[height=4cm]{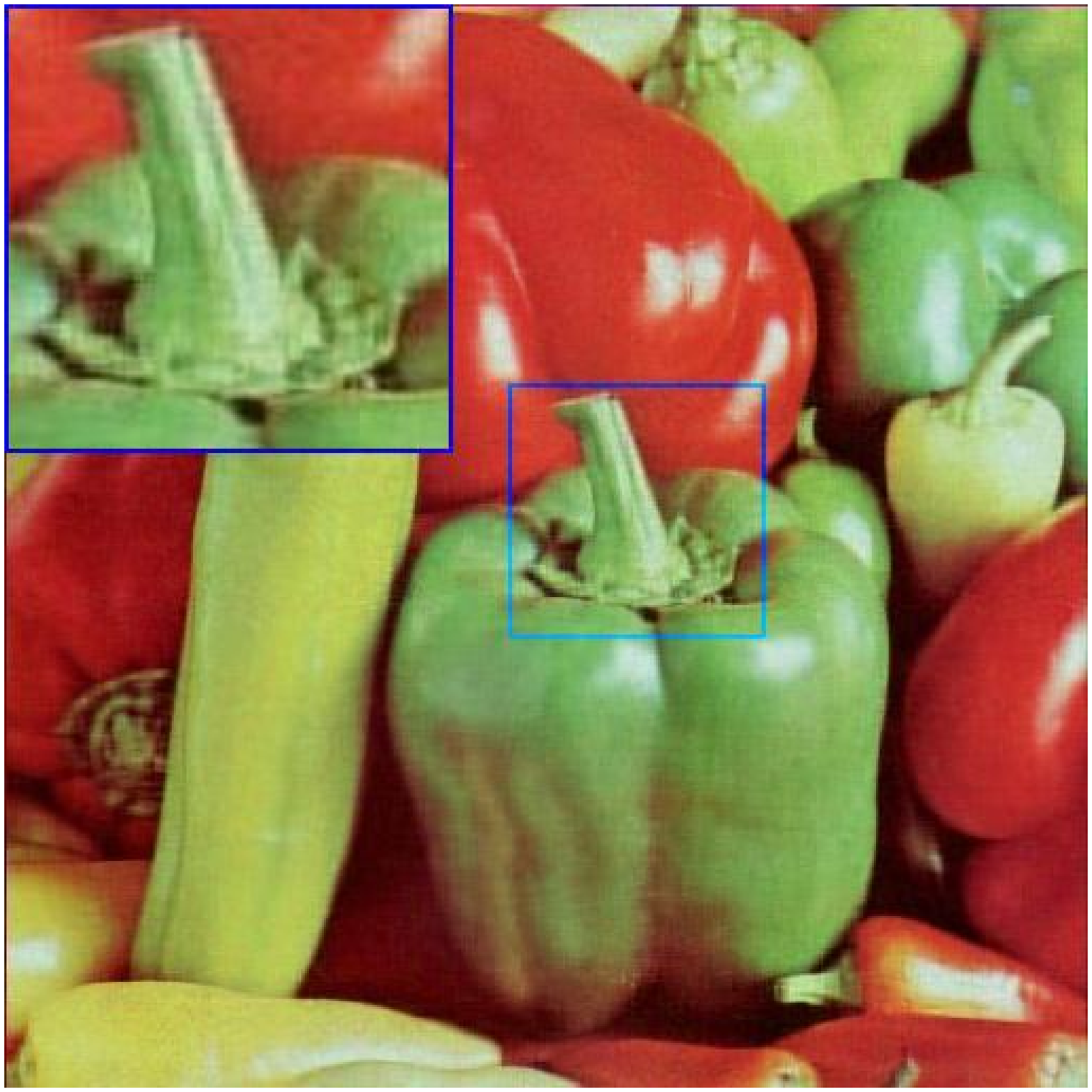}} \\

\subfigure[]{\includegraphics[height=4cm]{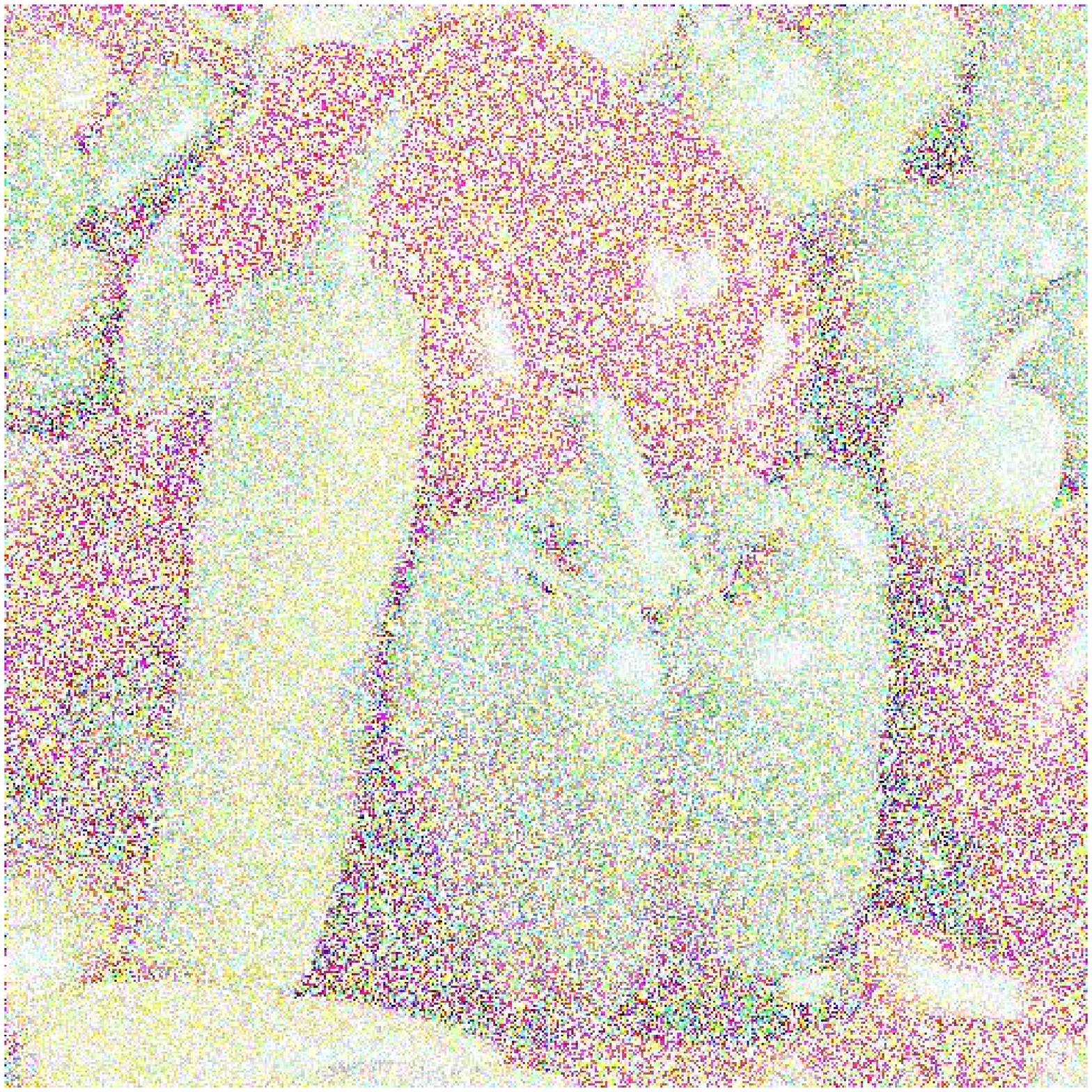}} \qquad
\subfigure[]{\includegraphics[height=4cm]{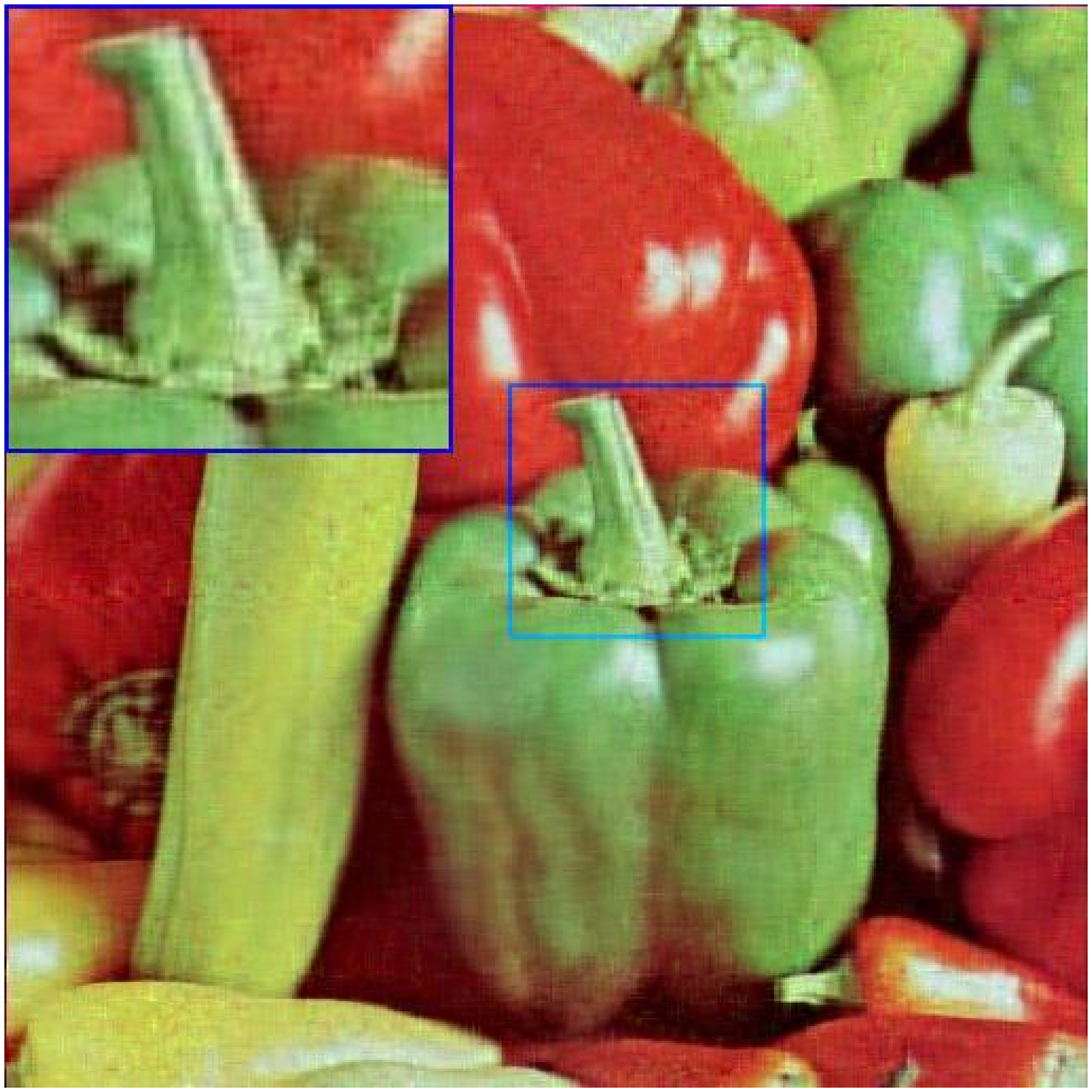}}~
\subfigure[]{\includegraphics[height=4cm]{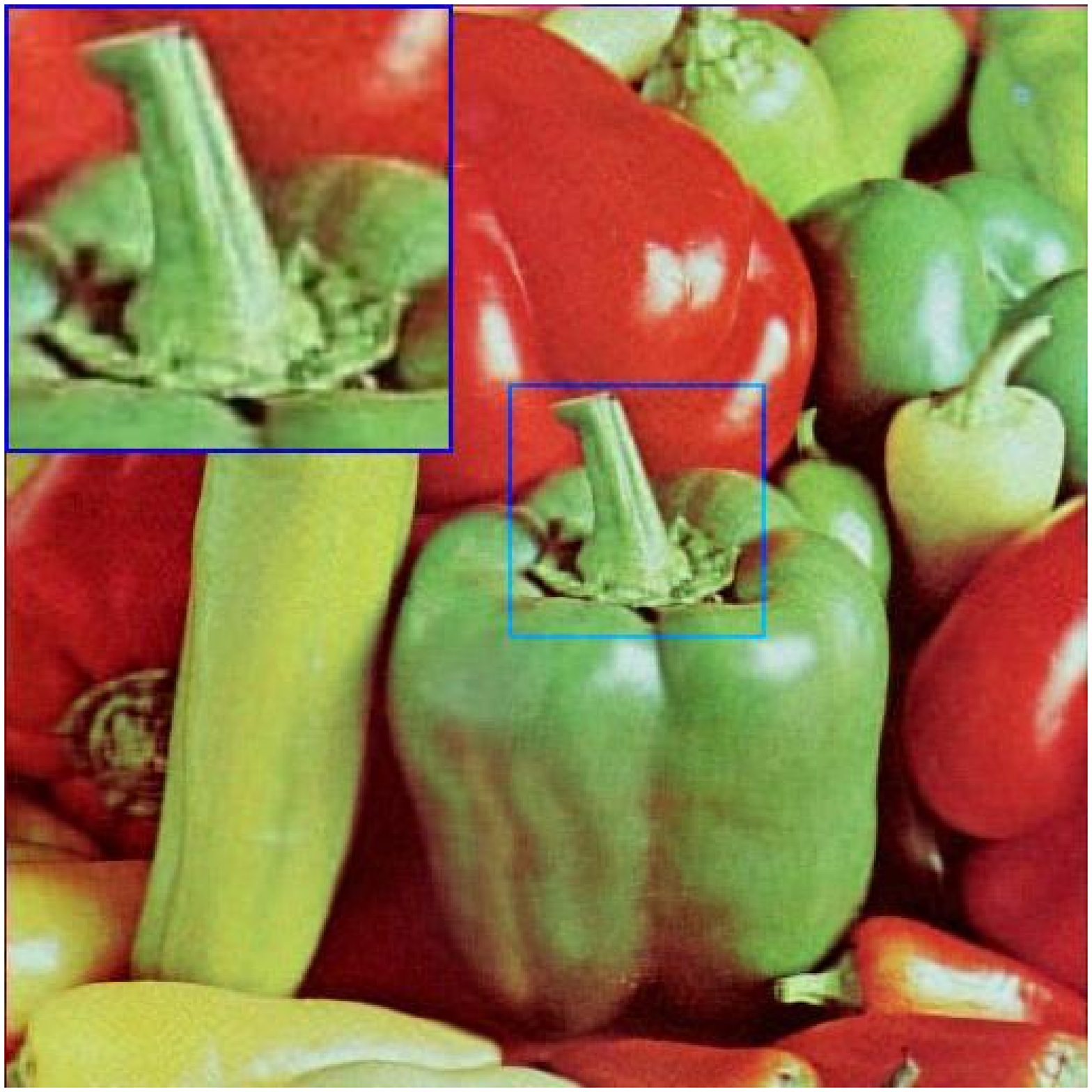}}
\caption{Comparisons of different algorithms for image inpaiting. The right two columns are recovery results of different
algorithms. Specifically, (a) Original $512\times512$ image; (d) The best rank-$(100, 100, 3)$ approximation of
original image; (g) Input to the algorithm (30\% known entries); (b) Recovered image by IHTr-LRTC; (e) Recovered image by IHT-LRTC;
(h) Recovered image by FP-LRTC; (c) Recovered image by TENSOR-HC; (f) Recovered image by HoRPCA;
(i) Recovered image by N-way-E.}
\end{figure}

\begin{table}[H]
\centering \tabcolsep 8pt
\small{
\begin{tabular}{|llcr|llcr|}
\multicolumn{8}{c}{\small{\textbf{Table 3.} Numerical results of different algorithms for image inpainting}} \vspace{1mm} \\
\hline
\small{Algorithm} & \small{iter} & \small{rel.err} & \small{time(s)} & \small{Algorithm} & \small{iter} & \small{rel.err} & \small{time(s)} \\
\hline
\multicolumn{8}{|c|}{\footnotesize{$size=512\times512\times3$, $sr=0.3$}}  \\
\hline
\multicolumn{4}{|c|}{\footnotesize{${\bm r}=(30,30,3)$}} & \multicolumn{4}{c|}{\footnotesize{${\bm r}=(100,100,3)$}} \\
\hline
\footnotesize{IHTr-LRTC}	 &\footnotesize{187}&\footnotesize{1.06e-7}&\footnotesize{32.46}& \footnotesize{IHTr-LRTC} &\footnotesize{1733}&\footnotesize{5.02e-7}&\footnotesize{332.05}	 \vspace{-0.8mm}\\	
\footnotesize{IHT-LRTC}	    &\footnotesize{665}&\footnotesize{6.40e-8}&\footnotesize{128.68}& \footnotesize{IHT-LRTC} &\footnotesize{5000}&\footnotesize{7.24e-4}&\footnotesize{998.69}		 \vspace{-0.8mm}\\
\footnotesize{FP-LRTC}	    &\footnotesize{1040}&\footnotesize{5.17e-7}&\footnotesize{454.13}& \footnotesize{FP-LRTC} &\footnotesize{1040}&\footnotesize{5.40e-2}&\footnotesize{629.40}	 \vspace{-0.8mm}\\	
\footnotesize{TENSOR-HC}	 &\footnotesize{144}&\footnotesize{1.81e-7}&\footnotesize{246.65}& \footnotesize{TENSOR-HC} &\footnotesize{775}&\footnotesize{4.58e-2}&\footnotesize{1337.67}		 \vspace{-0.8mm} \\
\footnotesize{HoRPCA}	 &\footnotesize{138}&\footnotesize{3.33e-7}&\footnotesize{93.69}& \footnotesize{HoRPCA} &\footnotesize{1000}&\footnotesize{4.55e-2}&\footnotesize{1198.55}	 \vspace{-0.8mm}\\
\footnotesize{N-way-E}	 &\footnotesize{107}&\footnotesize{4.72e-7}&\footnotesize{121.64}& \footnotesize{N-way-E} &\footnotesize{618}&\footnotesize{1.34e-4}&\footnotesize{1336.11}	 \vspace{0mm}\\
\hline
\end{tabular}}
\end{table}

\section{Conclusions }
In this paper, we considered a new alternative recovery model `MnRA' and proposed an appropriate iterative hard
thresholding algorithm to solve it with giving upper bound of $n$-rank in advance. The convergence analysis of the proposed
algorithm was also presented. By using an effective heuristic of determining $n$-rank, we can also apply the proposed algorithm
to solve MnRA with unknown $n$-rank in advance. Some preliminary numerical results on LRTC were reported.
Through the theoretical analysis and numerical experiments, we can draw some encouraging conclusions:
\begin{itemize}
\item The model of MnRA in this paper is creative in low $n$-rank tensor recovery. MnRA includes both noiseless and noisy case. And although the model needs the $n$-rank of the original data as prior information, we have proposed a heuristic for determining $n$-rank and this method turned to be efficient.

\item The iterative hard thresholding algorithm proposed in this paper is easy to implement. It has a very simple
      iterative scheme and only one parameter $\tau$, which can be easily estimated from theoretical analysis
      and can be chosen broadly in practice.

\item The iterative hard thresholding algorithm is extremely fast. Actually, the iterative sequence generated by the proposed
      algorithm is globally linearly convergent with the rate $\frac{1}{2}$ for the noiseless case. In our numerical experiments, these theoretical results can be confirmed.

\item IHTr-LRTC and IHT-LRTC are still effective for the tensor with high $n$-rank. Thus, they may have wider
      applications in practice.
\end{itemize}

It is interesting to investigate how to determine $n$-rank more effectively in practice. We believe that the
iterative hard thresholding algorithm combining with the appropriate method for predicting $n$-rank can be used
to solve more general tensor optimization problems. Moreover, the nonconvex sparse optimization problems and the
related algorithms in vector or matrix space have been widely discussed in the literature \citep{mzy2013, yzy2013}.
It is worth investigating how to apply the iterative hard thresholding algorithm to solve the nonconvex model
in the tensor space.

\section*{Acknowledgements}

We would like to thank Silvia Gandy for sending us the code of ADM-TR(E), and thank Marco Signoretto
for sending us the code of TENSOR-HC. This work was partially supported by the National Natural
Science Foundation of China (Grant No. 11171252 and No.11201332).

\end{document}